\newtheorem{theorem}{Theorem}[section]
\newtheorem{thm}[theorem]{Theorem}
\newtheorem{lem}[theorem]{Lemma}
\newtheorem{prop}[theorem]{Proposition}
\newtheorem{cor}[theorem]{Corollary}
\newtheorem{defn}[theorem]{Definition}
\newtheorem{rem}[theorem]{Remark}
\newtheorem{ex}[theorem]{Example}
\newcommand{\Spec}{\operatorname{Spec}\,}
\newcommand{\Ass}{\operatorname{Ass}}
\newcommand{\supp}{\operatorname{supp}\,}
\newcommand{\depth}{\operatorname{depth}\,}
\renewcommand{\dim}{\operatorname{dim}\,}
\newcommand{\Min}{\operatorname{Min}\,}
\newcommand{\h}{\operatorname{ht}\,}
\renewcommand{\H}{\operatorname{H}}
\newcommand{\V}{\operatorname{V}}
\newcommand{\G}{\operatorname{G}}
\newcommand{\fm}{\mathfrak{m}}
\newcommand{\fp}{\mathfrak{p}}
\newcommand{\fq}{\mathfrak{q}}
\newcommand{\fn}{\mathfrak{n}}
\newcommand{\excise}[1]{}
\begin{document}
\bibliographystyle{amsplain}

\title{On certain equidimensional polymatroidal ideals}

\author{Somayeh Bandari}
\address{Somayeh Bandari\\ School of Mathematics, Institute for
Research in Fundamental Sciences (IPM) P. O. Box: 19395-5746,
Tehran, Iran.} \email{somayeh.bandari@yahoo.com}

\author{Raheleh Jafari}
\address{Raheleh Jafari\\School of Mathematics, Institute for
Research in Fundamental Sciences (IPM) P. O. Box: 19395-5746,
Tehran, Iran.} \email{rjafari@ipm.ir}

\subjclass[2010]{13C05, 13C14, 05B35, 05E40}

\keywords{Connected in codimension one, equidimensional ideals, generalized Cohen-Macaulay, polymatroidal ideals,  unmixed ideals. \\
Somayeh Bandari was in part supported by a grant from IPM (No.
92130020)\\
Raheleh Jafari was in part supported by a grant from IPM (No.
92130420).}

\maketitle

\begin{abstract}
The class of equidimensional polymatroidal ideals is studied. In
particular, we show that an unmixed polymatroidal ideal is connected
in codimension one if and only if it is Cohen-Macaulay. Especially a
matroidal ideal is connected in codimension one precisely when it is
a squarefree Veronese ideal.  As a consequence we indicate that for
polymatroidal  ideals,  Serre's condition $(S_n)$ for some $n\geq
2$ is equivalent to Cohen-Macaulay property.  We also give a
classification of generalized Cohen-Macaulay polymatroidal ideals.
\end{abstract}

\section{introduction}

\excise{The Cohen-Macaulay property of monomial ideals has been
studied  by several authors, to study arithmetic properties  of
related objects like simplicial complexes, graphs and discrete
polymatroids.

The class of polymatroidal ideals (the monomial ideals corresponding
to the bases of  discrete polymatroids) has been studied }

Throughout this paper we consider monomial ideals of the polynomial
ring $S=k[x_1,\ldots,x_n]$  over a field $k$, and
$\fm=(x_1,\ldots,x_n)$ denotes the unique homogeneous maximal ideal.
The Cohen-Macaulay polymatroidal ideals are classified by Herzog and
Hibi \cite{HH}, into the principal ideals, the Veronese ideals, and
the squarefree Veronese ideals. As  mentioned in \cite{HH}, it is
natural and interesting  to classify all unmixed polymatroidal
ideals. Recall that an ideal $I$ is called unmixed if all prime
ideals in $\Ass(S/I)$ have the same height. If all minimal  prime
ideals of $I$ have the same height, then $I$ is called
equidimensional. Obviously an unmixed ideal $I$ is equidimensional
and the converse holds precisely when $\Min(S/I)=\Ass(S/I)$. In
particular a squarefree monomial ideal is equidimensional  if and
only if it is unmixed.

In this paper we study certain classes of equidimensional
polymatroidal ideals. After giving some preliminary concepts and
results in Section 2, we study the polymatroidal ideals connected in
codimension one, in Section 3. Consider the Zarisky topology on
$\Spec(S/I)$ for a monomial ideal $I$. $\Spec(S/I)$ is a connected
space with this topology.
The ideal $I$ is called connected in
codimension one, if $\Spec(S/I)$ remains connected after removing
closed subsets with codimension bigger than one \cite{Ha}. This
property can be expressed in terms of minimal prime ideals of
$I$, and implies that
  $I$ is equidimensional (see Remark \ref{equ}). From combinatorial point of view, a squarefree monomial ideal is
connected in codimension one, if it is the Stanley-Reisner ideal of
a strongly connected simplicial complex.

   As  mentioned
  in Remark \ref{def}, Cohen-Macaulay ideals are connected in
  codimension one. The aim of this section is to find when the
  converse holds  true for polymatroidal ideals.
Theorem \ref{pc} states that matroidal ideals connected in
codimension one, are precisely squarefree Veronese ideals and thus Cohen-Macaulay. We extend this result to unmixed
polymatroidal ideals in Theorem \ref{gc}, the essential result of
this section. The main consequence of this result is Corollary
\ref{co} which asserts that for polymatroidal
ideals satisfying Serre's condition $(S_n)$ for some
$n\geq 2$ is equivalent to being
Cohen-Macaulay.

The unmixed polymatroidal ideals have also been studied by
Vl$\breve{a}$doiu in \cite{V}. He shows that an ideal of Veronese
type is unmixed if and only if it is Cohen-Macaulay. Our second
target is to find equidimensional polymatroidal ideals which are not
Cohen-Macaulay, in Section 4. We show that a polymatroidal ideal
generated in degree 2, is equidimensional if and only if it is
generalized Cohen-Macaulay (see Proposition \ref{2}) and Example
\ref{exam1}(iii) is a non-Cohen-Macaulay ideal in this class. An
unmixed polymatroidal ideal generated in degree $d>2$, is not
necessarily generalized Cohen-Macaulay (see Example \ref{1.4}). In
the case of matroidal ideals, Theorem \ref{mat} states that
generalized Cohen-Macaulay matroidal ideals generated in degree
$d>2$, are precisely Cohen-Macaulay matroidal ideals.

By \cite[Proposition 5]{HV}, the polymatroidal ideal $I$ generated
in degree $d$ has an irredundant primary decomposition either of the
form $I=J\cap\fm^0$ or $I=J\cap\fm^d$. The classification of
generalized Cohen-Macaulay polymatroidal ideals, stated in Theorem
\ref{th}, indicates that  a fully supported monomial ideal
$I=J\cap\fm^s$ generated in degree $d$ with $s\in\{0,d\}$, is a
generalized Cohen-Macaulay polymatroidal ideal if and only if one of
the following statements holds true:
\begin{enumerate}
\item[a)] $J$ is a Cohen-Macaulay polymatroidal ideal i.e. $J$ is either  a principal ideal, a Veronese ideal, or a squarefree Veronese ideal.
\item[b)] $J=\fp_1^{a_1}\cap\cdots\cap\fp_r^{a_r}$ is equidimensional and $\fp_i+\fp_j=\fm$ for all $i\neq j$.
\item[c)] $J$ is an unmixed matroidal ideal of degree 2.
\end{enumerate}

There are examples illustrating the significance of each of the
items in the above characterization  and showing that none of them
can be removed, see Examples \ref{exam1} and \ref{exam2}.

\section{preliminaries}

Throughout this paper $S=k[x_1,\ldots,x_n]$ is the polynomial ring
over a field $k$ with  the unique homogenous maximal ideal
$\fm=(x_1,\ldots,x_n)$.
 For a monomial ideal $I$ of $S$, the minimal set of monomial generators of $I$ is denoted by $\G(I)$ and
 $\supp(I):=\{x_i ; 1\leq i\leq n, x_i|u \mbox{ for some } u\in\G(I) \}$.
  We call the monomial ideal $I$ \emph{fully supported} if $\supp(I)=\{x_1,\ldots,x_n\}$.
  An ideal $I$ is said to be \emph{unmixed} if all associated prime
  ideals of $I$ have the same height and is called \emph{equidimensional} if all minimal prime ideals have the same height.

  A monomial ideal $I$ is called  a \emph{polymatroidal} ideal, if it is generated in a
  single degree with the exchange  property that  for any two
   elements $u,v\in \G(I)$ with  $\deg_{x_i}(u)> \deg_{x_i}(v)$, there exists an index $j$  with
   $\deg_{x_j}(u)< \deg_{x_j}(v)$ such that $x_j(u/x_i)\in \G(I)$.
It is easy to see that a monomial ideal $I$ is polymatroidal if and only if for all monomials
 $u,v\in\G(I)$ with $\deg_{x_i}(u)>\deg_{x_i}(v)$ for some $i$,  there  exists an integer $j$
such that $\deg_{x_j}(v)>\deg_{x_j}(u)$ and $x_j(u/x_i)\in I$. A squarefree polymatroidal ideal is called a \emph{matroidal} ideal.

Recall that any polymatroidal ideal $I$ has a linear resolution by \cite[Lemma 1.3]{HT} and \cite[Lemma 4.1]{CH}.
As a consequence the Castelnuovo-Mumford regularity of $I$ is equal to $d$,
where $I$ is generated in degree $d$ and we have the following  presentation for $I$ which we will use it frequently in our approach.

\begin{prop}\cite[Proposition 5]{HV}\label{poly}
For a   polymatroidal ideal $I\subset S$ with
$\Ass(S/I)\setminus\{\fm\}=\{\fp_1,\ldots,\fp_r\}$, there are
integers $a_i>0$ and $s\geq 0$ such that
$I=\fp_1^{a_1}\cap\cdots\cap\fp_r^{a_r}\cap\fm^s$ and $I$ is
generated in degree $s$, when $s>0$.
\end{prop}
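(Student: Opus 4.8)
The plan is to pin down the primary decomposition one component at a time. Since $I$ is a monomial ideal, it admits an irredundant decomposition $I=\bigcap_{\fp\in\Ass(S/I)}Q_{\fp}$ into monomial primary ideals, and every associated prime is a monomial prime $\fp_A=(x_i:i\in A)$. The statement thus splits into two assertions: the component at the maximal ideal (when $\fm\in\Ass(S/I)$) equals $\fm^{d}$, and the component at each $\fp_i=\fp_{A_i}\neq\fm$ equals a pure power $\fp_i^{a_i}$. I would settle the maximal-ideal case first, since the remaining components are reduced to it in fewer variables. We may assume $I$ is fully supported, replacing $S$ by $k[\supp(I)]$ otherwise.

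For the maximal-ideal component I use the linear resolution recalled above. Since $\reg(I)=d$ we have $\reg(S/I)=d-1$, so $H^{0}_{\fm}(S/I)=I^{\mathrm{sat}}/I$ (with $I^{\mathrm{sat}}=(I:\fm^{\infty})$) vanishes in all degrees $\geq d$, i.e. $(I^{\mathrm{sat}})_t=I_t$ for $t\geq d$. Comparing graded pieces degree by degree and using $I\subseteq\fm^{d}$ gives
\[
I=I^{\mathrm{sat}}\cap\fm^{d}.
\]
Hence if $\fm\in\Ass(S/I)$ its primary component is $\fm^{d}$ and we take $s=d$, whereas if $\fm\notin\Ass(S/I)$ then $I=I^{\mathrm{sat}}$ and we take $s=0$; in both cases $s\in\{0,d\}$ and $I$ is generated in degree $s$ when $s>0$. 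It remains to identify $I^{\mathrm{sat}}$ with $\bigcap_i\fp_i^{a_i}$.

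I apply the maximal-ideal case not only to $I$ but to each monomial localization, using the known fact that every $I(\fp_A)$ (obtained by setting $x_j=1$ for $j\notin A$) is again polymatroidal, living in the $|A|$ variables indexed by $A$. For $\fp_i=\fp_{A_i}\neq\fm$ we have $A_i\subsetneq[n]$, and since $\fp_{A_i}\in\Ass(S/I)$ the maximal ideal $\bar\fm_{A_i}$ of $k[x_j:j\in A_i]$ is associated to $I(\fp_{A_i})$; the maximal-ideal case applied to the polymatroidal ideal $I(\fp_{A_i})$ shows its $\bar\fm_{A_i}$-component is $\bar\fm_{A_i}^{a_i}=\fp_{A_i}^{a_i}$, where $a_i$ is the degree in which $I(\fp_{A_i})$ is generated. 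The inclusion $I\subseteq\bigcap_i\fp_i^{a_i}$ is then immediate, because each generator $u$ satisfies $\sum_{j\in A_i}\deg_{x_j}(u)\geq a_i$ (this is just $I(\fp_{A_i})\subseteq\bar\fm_{A_i}^{a_i}$), and since each $\fp_i^{a_i}$ is $\fm$-saturated we obtain $I^{\mathrm{sat}}\subseteq\bigcap_i\fp_i^{a_i}$.

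The reverse inclusion is the crux, and I would prove it by induction on $|\supp(I)|$ through hyperplane restrictions. For any $\fm$-saturated monomial ideal $L$ one has the reconstruction $L=\bigcap_{j=1}^{n}L(\fp_{[n]\setminus\{j\}})\,S$, since every non-maximal primary component involves only the variables of its radical. Both $I^{\mathrm{sat}}$ and $M:=\bigcap_i\fp_i^{a_i}$ are $\fm$-saturated, so it suffices to match them on each coordinate hyperplane. Now $I(\fp_{[n]\setminus\{j\}})$ is polymatroidal in $n-1$ variables, and applying the inductive hypothesis to it (together with the compatibility $I(\fp_{[n]\setminus\{j\}})(\fp_{A_k})=I(\fp_{A_k})$) yields $I^{\mathrm{sat}}(\fp_{[n]\setminus\{j\}})=\bigcap_{A_k\subseteq[n]\setminus\{j\}}\fp_{A_k}^{a_k}=M(\fp_{[n]\setminus\{j\}})$, whence $I^{\mathrm{sat}}=M$. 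The main obstacle is precisely this reverse inclusion: one cannot shortcut it by asserting that $\fm$ is the only embedded prime, since that is false — for instance $x_4(x_1^{2},x_1x_2,x_1x_3,x_2x_3)$ is polymatroidal with $(x_1,x_2,x_3)$ an embedded prime different from $\fm$ — so the passage to fewer variables is unavoidable, and its success rests on the stability of the polymatroidal property under monomial localization and on carefully matching the two sides on every hyperplane.
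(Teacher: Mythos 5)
The paper offers no proof of this proposition at all: it is quoted directly from \cite{HV}, so there is no internal argument to compare yours against; your proposal is a genuine, self-contained reconstruction, and it is correct. Its two pillars both hold up. First, since polymatroidal ideals have linear resolutions (recalled in the paper just before the statement), $\reg(S/I)=d-1$ forces $\H^0_\fm(S/I)=I^{\mathrm{sat}}/I$ to vanish in degrees $\geq d$, and comparing graded pieces indeed gives $I=I^{\mathrm{sat}}\cap\fm^{d}$; this settles the $\fm$-component, the value $s\in\{0,d\}$, and the clause that $I$ is generated in degree $s$ when $s>0$. Second, for the non-maximal primes you localize (via \cite[Corollary 3.2]{HRV}, the same fact the paper uses in Lemma \ref{loc}), define each $a_i$ intrinsically as the initial degree of $I(\fp_i)$, and recover $I^{\mathrm{sat}}=\bigcap_i\fp_i^{a_i}$ by induction on the number of variables through the reconstruction of an $\fm$-saturated monomial ideal from its hyperplane restrictions. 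Your insistence on this route is well placed: your example $x_4(x_1^2,x_1x_2,x_1x_3,x_2x_3)$ really is polymatroidal with the embedded prime $(x_1,x_2,x_3)\neq\fm$, so one genuinely cannot treat the non-maximal components as if they sat only over minimal primes, and making the exponents canonical is exactly what lets the inductive hypothesis transfer consistently across the different hyperplane restrictions. Two points deserve to be made explicit in a polished write-up, though neither is a gap: (i) the identity $I^{\mathrm{sat}}(\fp)=I(\fp)$ for monomial primes $\fp\neq\fm$, which you use silently when passing from the reconstruction of $I^{\mathrm{sat}}$ to the localizations of $I$ (it follows at once from Remark \ref{Q}(a), since localization discards the $\fm$-primary component); and (ii) the fact that the statement you induct on must be the refined one, ``the decomposition holds with exponents equal to the initial degrees of the monomial localizations,'' since the bare existential form of the proposition would not allow you to match exponents of a prime $\fp_k$ appearing in two different restrictions $I(\fp_{[n]\setminus\{j\}})$ and $I(\fp_{[n]\setminus\{j'\}})$ --- your compatibility remark $I(\fp_{[n]\setminus\{j\}})(\fp_{A_k})=I(\fp_{A_k})$ is precisely what makes this work.
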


The following observation shows that an unmixed  polymatroidal
ideal generated in degree 2, is not very far from a matroidal ideal.

\begin{lem}\label{poly2}
Let $I$ be a fully supported polymatroidal ideal of $S$, generated
in degree 2. If  $I$ is unmixed, then $I$ is a matroidal ideal or
$I=\fm^2$.
\end{lem}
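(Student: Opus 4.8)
The plan is to prove the statement in the form: assuming $I$ is \emph{not} matroidal, I will show $I=\fm^2$. Since matroidal means squarefree, if $I$ is not matroidal then some square occurs among its generators; after relabelling, $x_1^2\in\G(I)$. The heart of the argument is that a single such square, combined with full support and the exchange property, forces $x_1x_i\in I$ for every $i$; unmixedness then upgrades this to $I=\fm^2$.

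First I extract two consequences of the exchange property. (i) If $x_1^2,x_j^2\in\G(I)$ with $j\neq1$, then applying the exchange property to $u=x_1^2$ and $v=x_j^2$ in the coordinate $x_1$ forces the exchanged index to be $j$ (as $v=x_j^2$ has positive degree only in $x_j$), giving $x_1x_j\in I$. (ii) If $x_1^2\in\G(I)$ and $x_ix_{i'}\in\G(I)$ with $1\notin\{i,i'\}$, then applying the exchange property to $u=x_ix_{i'}$ and $v=x_1^2$ in the coordinate $x_i$ (respectively $x_{i'}$) forces the exchanged index to be $1$, since $x_1$ is the only variable in which $v=x_1^2$ can exceed $u$; this yields $x_1x_{i'}\in I$ (respectively $x_1x_i\in I$). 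Now fix $i\neq1$. By full support $x_i$ divides some generator $w$ of degree $2$, so $w=x_i^2$ or $w=x_ix_{i'}$. If $w=x_i^2$, then (i) gives $x_1x_i\in I$; if $w=x_ix_{i'}$ with $i'=1$, then $x_1x_i=w\in I$; and if $w=x_ix_{i'}$ with $i'\neq1$, then (ii) gives $x_1x_i\in I$. Hence $x_1x_i\in I$ for all $i$, and together with $x_1^2\in I$ this gives $\fm\subseteq(I:x_1)$. Since $x_1\notin I$ we get $(I:x_1)=\fm$, so $\fm\in\Ass(S/I)$.

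Unmixedness now finishes the proof. As $\fm\in\Ass(S/I)$ has height $n$ and all associated primes of $I$ share one height, every associated prime has height $n$ and therefore equals $\fm$; thus $I$ is $\fm$-primary and $\sqrt I=\fm$. Hence each $x_i$ lies in $\sqrt I$, so some power $x_i^N$ belongs to $I$; as $I$ is generated in degree $2$ and the only degree-$2$ divisor of a power of $x_i$ is $x_i^2$, we conclude $x_i^2\in\G(I)$ for every $i$. Applying (i) to every pair $x_i^2,x_j^2$ then gives $x_ix_j\in I$ for all $i\neq j$, so $\fm^2\subseteq I$; the reverse inclusion holds because $I$ is generated in degree $2$. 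Therefore $I=\fm^2$, as required.

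I expect the main obstacle to be step (ii), the exchange computation pitting the square $x_1^2$ against a disjoint edge $x_ix_{i'}$. The subtlety is that $x_1^2$ is supported on the single variable $x_1$, which is exactly what pins the exchanged index to $1$; one must run the exchange in each of the two coordinates $x_i$ and $x_{i'}$ separately to harvest both products, and must dispose of the degenerate case $i'=1$ by hand, so that full support genuinely delivers $x_1x_i\in I$ for every variable $x_i$. Once these exchange computations are in place, the reduction through $\fm\in\Ass(S/I)$ and the height argument from unmixedness are routine.
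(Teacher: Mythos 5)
Your proof is correct, and it takes a genuinely different route from the paper's. The paper splits on the number of associated primes: when $|\Ass(S/I)|=1$, full support and the degree hypothesis force $I=\fm^2$; otherwise it invokes the decomposition $I=\fp_1^{a_1}\cap\cdots\cap\fp_r^{a_r}$ of Proposition \ref{poly} (i.e.\ \cite[Proposition 5]{HV}) and uses equality of heights to produce, for each pair of components, a generator divisible by positive powers of a variable in $\fp_i\setminus\fp_j$ and of one in $\fp_j\setminus\fp_i$, so that degree $2$ forces all $a_i=1$, making $I$ squarefree and hence matroidal. You instead argue along the other side of the dichotomy: assuming a non-squarefree generator $x_1^2\in\G(I)$, you use only the exchange property plus full support to get $x_1\fm\subseteq I$, hence $\fm=(I:x_1)\in\Ass(S/I)$; unmixedness then collapses $\Ass(S/I)$ to $\{\fm\}$, and $\sqrt{I}=\fm$ together with generation in degree $2$ gives $\fm^2\subseteq I\subseteq\fm^2$. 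Your exchange computations (i) and (ii) are both applied correctly (the one-variable support of $x_1^2$ really does pin down the exchanged index), and the colon argument is valid since $x_1\notin I$. What your approach buys is self-containedness---no appeal to the structure theorem of \cite{HV}---and a sharper localization of where unmixedness enters: your argument in fact shows that every non-squarefree fully supported polymatroidal ideal of degree $2$ has $\fm$ as an associated prime, which is precisely what happens in the non-unmixed example $(x_1^2,x_1x_2,x_1x_3,x_2x_3)$ that the paper gives right after the lemma. What the paper's argument buys is brevity and uniformity, since Proposition \ref{poly} is its standing tool throughout the article.
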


\begin{proof}
If $|\Ass(S/I)|=1$, then the result is clear. Otherwise, let
$I=\fp_1^{a_1}\cap\cdots\cap\fp_r^{a_r}$ be the minimal primary
decomposition mentioned in Proposition \ref{poly}. Since
$\h(\fp_i)=\h(\fp_j)$ for all $i\neq j$, there exist
$x_i\in\fp_i\setminus\fp_j$ and $x_j\in\fp_j\setminus\fp_i$.
Therefore $x_i^{a'_i}x_j^{a'_j}|u$  for  $a'_i\geq a_i\geq 1$,
$a'_j\geq a_j\geq 1$ and some $u\in\G(I)$. Now, since $\deg(u)=2$,
we have that $a_i=a_j=1$ for all $i\neq j$ and so $I$ is a matroidal
ideal.
\end{proof}

The unmixed condition is necessary in the above lemma. For instance consider the equidimensional
polymatroidal ideal  $I=(x_1^2,x_1x_2,x_1x_3,x_2x_3)=(x_1,x_2)\cap(x_1,x_3)\cap(x_1,x_2,x_3)^2$.
The point in this example is that
$I$ contains a pure power of a variable $x_1$ but not any other powers $x_2^2$ or $x_3^2$. The following
result shows that it can not happen if the ideal is unmixed.

\begin{prop}
Let $I$ be an unmixed fully supported polymatroidal ideal of $S$,
generated in degree $d$. If $x_j^d\in I$ for some $1\leq j\leq n$,
then $I=\fm^d$.
\end{prop}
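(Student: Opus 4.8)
The plan is to read off the structure of $I$ from Proposition \ref{poly} and then exploit the hypothesis $x_j^d\in I$ together with full support to manufacture an element of $I$ of degree strictly less than $d$, which is absurd. Write $I=\fp_1^{a_1}\cap\cdots\cap\fp_r^{a_r}\cap\fm^s$ with $\{\fp_1,\ldots,\fp_r\}=\Ass(S/I)\setminus\{\fm\}$, $a_i\ge 1$ and $s\in\{0,d\}$, and set $A_i=\{k:x_k\in\fp_i\}$, so that $\fp_i=(x_k:k\in A_i)$ with $A_i\subsetneq\{1,\ldots,n\}$. Observe that $I=\fm^d$ is exactly the case $r=0$: then $I$ is $\fm$-primary, so $I=\fm^s$ by Proposition \ref{poly}, and being generated in degree $d$ forces $s=d$. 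Thus the whole point is to rule out $r\ge 1$, and I would argue by contradiction, assuming $r\ge 1$.

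First I would dispose of the maximal ideal. Since each $\fp_i\neq\fm$ is a monomial prime, $\h\fp_i=|A_i|<n=\h\fm$; if $\fm$ were associated to $S/I$, unmixedness would force $\h\fp_i=\h\fm$, a contradiction. Hence $\fm\notin\Ass(S/I)$, so $s=0$ and $I=\fp_1^{a_1}\cap\cdots\cap\fp_r^{a_r}$. Next, for a monomial $u$ put $\ord_{\fp_i}(u)=\sum_{k\in A_i}\deg_{x_k}(u)$, so that $u\in\fp_i^{a_i}$ if and only if $\ord_{\fp_i}(u)\ge a_i$. Applying this to $u=x_j^d\in I\subseteq\fp_i^{a_i}$, and noting that $x_j^d$ involves only $x_j$, yields $j\in A_i$ and $a_i\le d$ for every $i$.

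The heart of the argument, and the step I expect to be the main obstacle, is to improve this to $a_i\le d-1$. Here I would use full support: if $a_i=d$ for some $i$, then every generator $u\in\G(I)$ satisfies $d=\deg u\ge\ord_{\fp_i}(u)\ge a_i=d$, forcing $\ord_{\fp_i}(u)=d$ and hence $\supp u\subseteq A_i$ for all $u$; thus $\supp I\subseteq A_i\subsetneq\{1,\ldots,n\}$, contradicting that $I$ is fully supported. With $a_i\le d-1$ in hand (and $a_i\ge 1$, which already forces $d\ge 2$), the fact that $j\in A_i$ gives $\ord_{\fp_i}(x_j^{d-1})=d-1\ge a_i$ for every $i$, so $x_j^{d-1}\in\bigcap_i\fp_i^{a_i}=I$. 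But $x_j^{d-1}$ is a nonzero element of $I$ of degree $d-1<d$, which is impossible since $I$ is generated in degree $d$. This contradiction shows $r=0$, whence $I=\fm^d$.

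A final remark on where the hypotheses enter. Equidimensionality alone would not suffice, since then $\fm$ may be associated — this is precisely what happens in the displayed example $I=(x_1,x_2)\cap(x_1,x_3)\cap\fm^2$ — and it is the unmixed assumption that kills the $\fm^s$-component and lets the height comparison of the first step go through, while full support is what forbids $a_i=d$ and so drives the degree below $d$. I do not expect any appeal to the exchange property or to delicate combinatorics to be needed; the entire argument runs through the primary decomposition, and the only place demanding care is making the inequality $a_i\le d-1$ airtight via the support computation.
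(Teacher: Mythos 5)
Your proof is correct and follows essentially the same route as the paper's: both pass through the decomposition $I=\fp_1^{a_1}\cap\cdots\cap\fp_r^{a_r}$ of Proposition \ref{poly} (with unmixedness disposing of the $\fm^s$-component), both read off from $x_j^d\in I$ that $x_j\in\fp_i$ and $a_i\le d$ for every $i$, and both rest on the key observation that an exponent $a_i=d$ forces every degree-$d$ generator to be supported inside $\fp_i$, which collides with full support. The only difference is bookkeeping: the paper argues directly that $\max_i a_i=d$, so the corresponding prime must be $\fm$ and incomparability gives $r=1$, whereas you exclude $a_i=d$ for all $i$ and land the contradiction on $x_j^{d-1}\in I$ against generation in degree $d$ --- the same ingredients with the contradiction relocated.
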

\begin{proof}
Let $I=\fp_1^{a_1}\cap\cdots\cap\fp_r^{a_r}$ be the minimal primary
decomposition. Since $x_j^d\in I$ for some $j$, we have
$x_j^d=x_j^{\max\{a_i ; i=1,\ldots,r\}}$.  For simplicity, assume that $d=a_1$. Since $I=\fp_1^{d}\cap\cdots\cap\fp_r^{a_r}$ is generated in degree
$d$ and $\fp_i\nsubseteq\fp_1$ for all $i=2,\ldots,r$, we get  $I=\fp_1^d=\fm^d$.
\end{proof}

The unmixed polymatroidal ideals which appear in the above
statement, powers of the maximal ideal $\fm$, are called Veronese
ideals. In other words
 the (\emph{squarefree}) \emph{Veronese ideal} of degree $d$ in the variables $x_{i_1},\ldots,x_{i_r}$ is
the ideal of $S$ which is generated by all (squarefree) monomials in
$x_{i_1},\ldots,x_{i_r}$ of degree $d$.

\begin{thm}\cite[Theorem 4.2]{HH}\label{CM}
A polymatroidal ideal $I$ is Cohen-Macaulay if and only if $I$ is a principal ideal,  a Veronese ideal or  a squarefree Veronese ideal.
\end{thm}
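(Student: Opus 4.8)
The plan is to prove the two implications separately, with essentially all of the work concentrated in the ``only if'' direction.

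For the ``if'' direction I would check each family directly. If $I=(u)$ is principal, the resolution $0\lo S\st{\cdot u}{\lo}S\lo S/I\lo 0$ gives $\pd(S/I)=1$, whence $\depth(S/I)=n-1=\dim(S/I)$. If $I=\fp^d$ is a Veronese ideal with $\fp=(x_1,\ldots,x_r)$, then $x_{r+1},\ldots,x_n$ is a regular sequence on $S/I$ whose quotient is the Artinian ring $k[x_1,\ldots,x_r]/(x_1,\ldots,x_r)^d$, so $\depth(S/I)\geq n-r=\dim(S/I)$. Finally, a squarefree Veronese ideal of degree $d$ in $r$ variables is, after the harmless polynomial extension by the remaining variables, the Stanley--Reisner ideal of the $(d-2)$-skeleton of a simplex; such a complex is pure and shellable, hence Cohen--Macaulay over every field by Reisner's criterion. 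In all three cases $\depth(S/I)=\dim(S/I)$.

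For the converse, assume $I$ is Cohen--Macaulay. Since discarding the variables outside $\supp(I)$ alters $S/I$ only by a polynomial extension and preserves Cohen--Macaulayness (and keeps us inside the three families), I may assume $I$ is fully supported of degree $d$. I would then clear the degenerate cases. If $d=1$ then $I=\fm$, a squarefree Veronese ideal. In general I use the decomposition $I=\fp_1^{a_1}\cap\cdots\cap\fp_r^{a_r}\cap\fm^s$ of Proposition \ref{poly} together with the fact that a Cohen--Macaulay ideal is unmixed: if $\fm\in\Ass(S/I)$ then $\dim(S/I)=0$, the decomposition collapses to $I=\fm^s$ with $s=d$, and $I=\fm^d$ is Veronese. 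Hence I may assume $\fm\notin\Ass(S/I)$, so $s=0$ and $I=\fp_1^{a_1}\cap\cdots\cap\fp_r^{a_r}$ is unmixed of positive dimension; the subcase $r=1$ forces $\fp_1=\fm$ by full support, again giving a Veronese ideal, and if $I$ is principal it already belongs to the list.

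This leaves the essential case $r\geq 2$ with at least two generators, where the goal is to show $I$ is the squarefree Veronese ideal of degree $d$. Here I would induct on $n$ via deletion and contraction: for a variable $x_i$ both $I:x_i$ and the image of $I$ in $S/(x_i)$ are again polymatroidal in fewer variables, and the short exact sequence
\[
0\lo S/(I:x_i)\lo S/I\lo S/(I,x_i)\lo 0
\]
combined with the depth lemma lets me transfer the Cohen--Macaulay hypothesis to these smaller ideals, which by induction are known to be principal, Veronese, or squarefree Veronese. The main obstacle is the gluing step: I must show that the only configuration in which all the restricted pieces fall into the admissible families while respecting the common height of the $\fp_i$ is the one in which every exponent $a_i$ equals $1$ and the support is uniform. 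Ruling out genuine powers uses the polymatroidal exchange property together with the earlier observation that $x_j^d\in I$ forces $I=\fm^d$, while pinning down uniformity of the support is the delicate combinatorial bookkeeping that simultaneously tracks heights, the single generating degree, and the exchange property; once this is done, $I$ is forced to be the squarefree Veronese ideal and the classification is complete.
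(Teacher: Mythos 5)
This statement is quoted by the paper from Herzog--Hibi \cite[Theorem 4.2]{HH}; the paper itself gives no proof, so your attempt has to stand entirely on its own, and it does not. Your ``if'' direction is fine: the hypersurface argument for principal ideals, the regular-sequence reduction to an Artinian quotient for Veronese ideals, and the identification of a squarefree Veronese ideal with the Stanley--Reisner ideal of a skeleton of a simplex (shellable, hence Cohen--Macaulay by Reisner) are all correct and complete. The preliminary reductions in the converse are also sound: passing to $\supp(I)$, the case $d=1$, the case $\fm\in\Ass(S/I)$ (which forces $I=\fm^d$ since a Cohen--Macaulay ideal is unmixed), and the case $r=1$. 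But the entire mathematical content of the theorem lies in the remaining case ($I$ unmixed, fully supported, non-principal, with $r\geq 2$ minimal primes), and there you do not give a proof: you describe an induction and then explicitly defer ``the gluing step'' --- showing that every exponent $a_i$ equals $1$ and that the support behaves uniformly --- as ``delicate combinatorial bookkeeping.'' That step \emph{is} Herzog--Hibi's theorem; leaving it as a declared obstacle means the hard direction is not proved at all.

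Moreover, the induction machinery you propose is itself defective, so the plan could not be executed as written. First, $I:x_i$ is in general \emph{not} polymatroidal: writing $I:x_i$ as the sum of the contraction ideal $(u/x_i : u\in\G(I),\ x_i\mid u)$, generated in degree $d-1$, and the deletion ideal $(u : u\in\G(I),\ x_i\nmid u)$, generated in degree $d$, one sees it need not be generated in a single degree, which is part of the definition of polymatroidal; only the deletion (the image of $I$ in $S/(x_i)$) and the contraction separately inherit the exchange property. Second, Cohen--Macaulayness does not ``transfer'' from $S/I$ to both ends of the sequence
\[
0\lo S/(I:x_i)\lo S/I\lo S/(I,x_i)\lo 0
\]
via the depth lemma: that lemma only bounds $\depth$ of one term from below by the other two, and without controlling $\dim S/(I:x_i)$ and $\dim S/(I,x_i)$ (which can differ from $\dim S/I$) you cannot conclude either quotient is Cohen--Macaulay. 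So the inductive hypothesis cannot even be invoked on the smaller ideals, independently of the missing gluing argument.
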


As a generalization of  Veronese ideals, if the ideal $I$ is
generated by all monomials $u$ of degree $d$ such that
$\deg_{x_i}(u)\leq a_i$ for some integers $a_i\geq 0$, the ideal $I$
is denoted by $I_{d;a_1,\ldots,a_n}$ and is called an ideal of
\emph{Veronese type}. Ideals of Veronese type are obviously
polymatroidal. If $I$ is an ideal of  Veronese type, then
$\Min(S/I)=\Ass(S/I)$ if and only if $I$ is unmixed if and only if
$I$ is Cohen-Macaulay, see \cite[Theorem 3.4]{V}.

Let $\fp$ be a monomial prime ideal of $S$. Then
$\fp=\fp_{\{i_1,\ldots,i_t\}}$ where
$\{i_1,\ldots,i_t\}=[n]\setminus\{i; x_i\in\supp(\fp)\}$ and
$IS_\fp=JS_\fp$, where $J$ is the monomial ideal  obtained from $I$
by
 the substitution $x_i\mapsto 1$ for all $i=i_1,\ldots,i_t$.
The ideal  $J$ is called  the \emph{monomial localization} of $I$ with respect to $\fp$ and is denoted by $I(\fp)$.
The following easy observation is a crucial point in using monomial localization as an effective tool.
\begin{rem}\label{Q}
Let $I=\cap^r_{i=1}Q_i$ be a primary decomposition of a monomial
ideal $I$.
\begin{enumerate}
\item[a)] $I(\fp_{\{1,\ldots,t\}})=\underset{i\in T}{\cap} Q_i$ where $T=\{i; 1\leq i\leq r, \supp(Q_i)\cap\{x_1,\ldots,x_t\}=\emptyset\}$.
\item[b)] If $I$ is generated in single degree $d$ and $I(\fp_{\{i\}})$ is generated in  single degree  $d_i$, then $d_i=d-a_i$ where
$a_i=\max\{\deg_{x_i}(u) ; u\in\G(I)\}$ and  \[\G(I(\fp_{\{i\}}))=\{\frac{u}{x_i^{a_i}} ; u\in\G(I) \mbox{ and } x_i^{a_i}|u\}.\]
\end{enumerate}
\end{rem}

\section{polymatroidal ideals connected in codimension one}

In this section we study the Cohen-Macaulay property of
polymatroidal ideals from topological point of view. Let $I$ be a
monomial ideal of $S$  and  consider the Zarisky topology on
$\Spec(S/I)$. Recall that the closed subsets in this topology are
the sets $\V(J)=\{\fq ; \fq\in\Spec(S) \mbox{ and }
J\subseteq\fq\}$, where $J\supseteq I$ is an ideal of $S$. The
irreducible components of $\Spec(S/I)$ are the closed sets
$\V(\fp)$, where $\fp$ is a minimal prime ideal of $I$.
$\Spec(S/I)$ with this topology is a connected space. The ideal $I$
is called \emph{connected in codimension one}, if $\Spec(S/I)$
remains connected after removing closed subsets with codimension
bigger than one \cite{Ha}. Since the codimension of $\V(\fp)$  is
equal to $\h(\fp)-\h(I)$ for all prime ideals $\fp\supseteq I$,  we
have the following definition by \cite[Proposition 1.1]{Ha}.

\begin{defn}\em{
A monomial ideal $I\subset S$ with height $h$, is connected in
codimension one, if for any pair of distinct prime ideals $\fp,
\fq\in\Min(S/I)$ there exists a sequence of minimal prime ideals
$\fp=\fp_1,\ldots,\fp_r=\fq$ such that $|\G(\fp_i+\fp_{i+1})|=h+1$,
for all $1\leq i\leq r-1$.}
\end{defn}

\begin{rem}\label{equ}\em{
By the above definition it is clear that a monomial ideal connected
in codimension one  is equidimensional and so
$|\G(\fp_i)\cap\G(\fp_{i+1})|=h-1$, for all $1\leq i\leq r-1$. Since
for a squarefree monomial ideal $I$, all associated prime ideals are
minimal, being equidimensional is equivalent to being unmixed. Thus if a squarefree monomial ideal $I$ is connected in codimension
one, then $I$ is unmixed.  }
\end{rem}

\begin{rem}\label{hart}
\em{In the context of Hartshorne \cite{Ha}, an   ideal $I\subset S$
is called locally connected in codimension one if all localizations
$I_\fp$  are connected in codimension one where $\fp\in\V(I)$. If
$I$ is a monomial ideal  and   $I_\fm$ is  connected in codimension
one, then $I$ is connected in codimension one.}
\end{rem}

From combinatorial point of view,  a pure simplicial complex
$\Delta$ is said to be {strongly connected} or {connected in
codimension one}, if for any two facets $F$ and $G$, there is a
sequence of facets $F=F_1, F_2,\ldots,F_r=G$ such that $\dim(F_i\cap
F_{i+1})=\dim\Delta-1$ or equivalently $\dim(F_i\cup
F_{i+1})=\dim\Delta+1$, for each $1\leq i\leq r-1$. A squarefree
monomial ideal is connected in codimension one, if it is the
Stanley-Reisner ideal of a strongly connected simplicial complex.

\begin{rem}\label{def}
\em{ Let $I$ be a Cohen-Macaulay monomial ideal. Then $I$ is
connected in codimension one, by \cite[Corollary 2.4]{Ha} and Remark
\ref{hart}. Another way to see this fact is observing that
$\sqrt{I}$ is also Cohen-Macaulay by \cite[Theorem 2.6]{HTT}. So
according to \cite[Lemma 9.1.12]{HH1}, $I$ is connected in
codimension one, since $\Min(S/I)=\Min(S/\sqrt{I})$.}
\end{rem}

Obviously an unmixed principal ideal is connected in codimension
one. As an easy way to construct a monomial ideal connected in
codimension one, we may  consider $I$ as the intersection of all
prime ideals generated by $h=\h(I)$ variables. It is indeed the
squarefree Veronese ideal generated in degree $d=n-h+1$
\cite[Theorem 3.4]{C}. From another point of view,
 $I$ is Cohen-Macaulay by Theorem \ref{CM} and  hence
  $I$ is
connected in codimension one by the above remark.

In Theorem \ref{pc}, we show that all matroidal  ideals connected in
codimension one, are precisely the squarefree Veronese ideals. As a
key point of our proof, we need the following simple
characterization which in  the case  $t=2$ is also proved by a
different method in  \cite[Lemma 2.3]{C}.

\begin{lem}\label{loc}
Let $I$ be a matroidal ideal  and
$T=\{x_1,\ldots,x_t\}\subseteq\supp(I)$. If for any $t-1$ elements
$x_{j_1},\ldots,x_{j_{t-1}}$ of $T$,  $x_{j_1}\cdots x_{j_{t-1}}|u$
for some $u\in\G(I)$, then the following statements are equivalent.

\emph{a)} $x_1\cdots x_t\nmid u$ for all $u\in\G(I)$.

\emph{b)}
$I(\fp_{\{1,\ldots,{t}\}})=I(\fp_{\{{j_1},\ldots,{j_{t-1}}\}})$ for
all $\{x_{j_1},\ldots,x_{j_{t-1}}\}\subseteq T$.

\emph{c)} $|\supp(\fp)\cap\{x_1,\ldots,x_t\}|\neq 1$ for all
$\fp\in\Ass(S/I)$.
\end{lem}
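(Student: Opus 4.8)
The plan is to show that all three conditions are equivalent to the single statement that no minimal generator of $I$ is divisible by $x_1\cdots x_t$, by passing through monomial localizations and Remark \ref{Q}. Since $I$ is matroidal it is squarefree, so $\Ass(S/I)=\Min(S/I)$ and, by Proposition \ref{poly}, $I=\fp_1\cap\cdots\cap\fp_r$ is an intersection of monomial primes. For a fixed $k\in\{1,\ldots,t\}$ write $\{x_{j_1},\ldots,x_{j_{t-1}}\}=T\setminus\{x_k\}$ and set $L_k:=I(\fp_{\{j_1,\ldots,j_{t-1}\}})$, the localization obtained by sending every variable of $T$ except $x_k$ to $1$. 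Because monomial localizations of polymatroidal ideals are again polymatroidal, each $L_k$ is generated in a single degree; the hypothesis that $x_{j_1}\cdots x_{j_{t-1}}$ divides some $u\in\G(I)$ forces this degree to be exactly $d-(t-1)$ and, by the iterated form of Remark \ref{Q}(b), gives $\G(L_k)=\{u/(x_{j_1}\cdots x_{j_{t-1}}):u\in\G(I),\ x_{j_1}\cdots x_{j_{t-1}}\mid u\}$.

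The pivot of the argument is the chain of equivalences, for each fixed $k$,
$$\big(\exists u\in\G(I):x_1\cdots x_t\mid u\big)\iff x_k\in\supp(L_k)\iff\big(\exists\fp\in\Ass(S/I):\supp(\fp)\cap T=\{x_k\}\big).$$
For the first equivalence I would use the description of $\G(L_k)$ above: $x_k$ divides a minimal generator of $L_k$ exactly when some $u\in\G(I)$ has $x_{j_1}\cdots x_{j_{t-1}}\mid u$ and $x_k\mid u/(x_{j_1}\cdots x_{j_{t-1}})$, that is, $x_1\cdots x_t\mid u$; here the single-degree property is what guarantees that these are the \emph{minimal} generators, so that $\supp$ is read off correctly. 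For the second equivalence I would invoke Remark \ref{Q}(a), which identifies the minimal primes of $L_k$ with those $\fp_i$ satisfying $\supp(\fp_i)\cap(T\setminus\{x_k\})=\emptyset$, together with the elementary fact that for a squarefree monomial ideal the support is the union of the supports of its minimal primes. Thus $x_k\in\supp(L_k)$ iff some retained prime $\fp_i$ contains $x_k$, and such a prime satisfies precisely $\supp(\fp_i)\cap T=\{x_k\}$.

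With the pivot in hand the theorem follows by quantifying over $k$. Statement a) is literally the negation of the leftmost condition. Statement b) for the subset $T\setminus\{x_k\}$ reads $I(\fp_{\{1,\ldots,t\}})=L_k$; by transitivity of monomial localization $I(\fp_{\{1,\ldots,t\}})=L_k(\fp_{\{k\}})$, and since $L_k$ is generated in a single degree this equals $L_k$ iff $x_k\notin\supp(L_k)$, i.e. iff the middle condition fails. Statement c) is the assertion that the rightmost condition fails for every $k$. Hence a), b) and c) are all equivalent to ``$x_1\cdots x_t\nmid u$ for all $u\in\G(I)$'', as required.

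I expect the main obstacle to be the first pivot equivalence: one must be sure that $L_k$ is genuinely generated in the single degree $d-(t-1)$, so that $\supp(L_k)$ is computed from the explicit minimal generators rather than from arbitrary (possibly non-minimal) images of $\G(I)$ --- recall that a variable dividing some element of an ideal need not lie in its support. This is exactly where the hypothesis on the $(t-1)$-fold products, and the fact that matroidal localizations stay matroidal, are indispensable; the remainder is bookkeeping with Remark \ref{Q}.
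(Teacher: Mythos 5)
Your proof is correct and rests on exactly the same ingredients as the paper's: that monomial localizations of matroidal ideals are matroidal (hence generated in a single degree), the resulting explicit description $\G(I(\fp_{\{j_1,\ldots,j_{t-1}\}}))=\{u/(x_{j_1}\cdots x_{j_{t-1}}) : u\in\G(I),\ x_{j_1}\cdots x_{j_{t-1}}\mid u\}$, Remark \ref{Q}, and the identification of the support of a squarefree monomial ideal with the union of the supports of its minimal primes. The only difference is organizational --- you prove (b) and (c) each equivalent to the negation of (a) through a common pivot, while the paper runs the cycle (a)$\Rightarrow$(b)$\Rightarrow$(c)$\Rightarrow$(a) --- so this is essentially the paper's argument.
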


\begin{proof}
$(a)\Rightarrow(b)$: By \cite[Corollary 3.2]{HRV} any monomial
localization of $I$ is again matroidal and so it is generated in a
single degree. Since $x_{j_1}\cdots x_{j_{t-1}}|u$ for some
$u\in\G(I)$, we have $I_j=I(\fp_{\{j_1,\ldots,j_{t-1}\}})$ is
generated in degree $d-t+1$ where $d$ is the degree of the
generators of $I$. Indeed,
\[\G(I_j)=\{\frac{u}{x_{j_1}\cdots x_{j_{t-1}}} ;  u\in\G(I) \mbox { and } x_{j_1}\cdots x_{j_{t-1}}|u \}.\]
On the other hand $x_1\cdots x_t\nmid u$ for all $u\in\G(I)$, therefore $x\notin\supp(I_j)$ for $x\in T\setminus \{x_{j_1},\ldots,x_{j_{t-1}}\}$, and it follows (b).

$(b)\Rightarrow(c)$: Assume that $x_i\in\fp$ for some $\fp\in\Ass(S/I)$ and $1\leq i\leq t$.
Therefore $\fp\notin\Ass(S/I(\fp_{\{1,\ldots,{t}\}}))=\Ass(S/I(\fp_{\{1,\ldots,{i-1},i+1,\ldots,t\}}))$,
that is $x_j\in\fp$ for some $1\leq j\neq i\leq t$.

$(c)\Rightarrow(a)$: Assume  that $x_1\cdots x_t|u$ for some
$u\in\G(I)$. Then $x_t\in\supp(I(\fp_{\{1,\ldots,t-1\}}))$ and so
there exists a prime ideal $\fp\in\Ass(S/I(\fp_{\{1,\ldots,t-1\}}))$
such that $x_t\in\fp$. Now (c) implies that $x_i\in\fp$ for some
$1\leq i\leq t-1$ which is a contradiction.
\end{proof}

Now we are able to classify all  matroidal ideals  connected in
codimension one.

\begin{thm}\label{pc}
Let $I$ be a monomial ideal. Then $I$ is a matroidal ideal connected
in codimension one if and only if $I$ is a squarefree Veronese
ideal.
\end{thm}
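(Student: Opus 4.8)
The plan is to prove the two implications separately, the forward one being essentially free from the earlier results and the converse carrying all the weight. If $I$ is a squarefree Veronese ideal, then it is matroidal directly from the definition (its generators are all squarefree monomials of a fixed degree, the bases of a uniform matroid, for which the exchange property is trivial) and it is Cohen--Macaulay by Theorem \ref{CM}, hence connected in codimension one by Remark \ref{def}. So the real task is the converse: a matroidal ideal $I$ connected in codimension one must be a squarefree Veronese ideal.

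For the converse I would first reduce to the fully supported case: passing to the polynomial ring on $\supp(I)$ alters neither the minimal primes (read inside the support) nor the defining combinatorial condition, and a squarefree Veronese ideal on a subset of variables is still squarefree Veronese. So assume $I$ is fully supported on $x_1,\ldots,x_n$, generated in degree $d$; by Remark \ref{equ} it is unmixed, so every $\fp\in\Min(S/I)$ is generated by exactly $h=\h(I)$ variables. Arguing by contradiction, suppose $I$ is not the squarefree Veronese ideal of degree $d$. Since $I$ is squarefree and generated in degree $d$, some squarefree monomial of degree $d$ fails to lie in $\G(I)$; I would choose a subset $T=\{x_1,\ldots,x_t\}$ of its support that is minimal for the property that $\prod_{x\in T}x$ divides no element of $\G(I)$. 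Full support forces $t\ge 2$, and clearly $t\le d$. By minimality, every product of $t-1$ of the variables of $T$ divides some generator, so the hypotheses of Lemma \ref{loc} hold with statement (a) being exactly our choice of $T$; hence (c) yields $|\supp(\fp)\cap T|\neq 1$ for all $\fp\in\Min(S/I)$.

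The core of the argument is then to exhibit two minimal primes whose intersections with $T$ lie on opposite sides of the forbidden value $1$. On one hand, no generator has support inside $T$ (impossible for degree reasons when $t<d$, and forcing $\prod_{x\in T}x\in\G(I)$ against (a) when $t=d$); therefore the monomial localization $I(\fp_{\{1,\ldots,t\}})$ obtained by setting $x_1,\ldots,x_t$ equal to $1$ is proper, and by Remark \ref{Q}(a) its minimal primes are precisely those $\fp\in\Min(S/I)$ with $\supp(\fp)\cap T=\emptyset$, so such a prime $\fp_0$ exists with $|\supp(\fp_0)\cap T|=0$. On the other hand, fix $x_e\in T$ and write a generator divisible by it as $x_e u'$; since $I=\bigcap_{\fp\in\Min(S/I)}\fp$ is generated in the single degree $d$, $x_e$ must lie in some minimal prime $\fp_1$, for otherwise primality of each minimal prime would force $u'\in\bigcap\fp=I$, contradicting $\deg u'=d-1$. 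Thus $|\supp(\fp_1)\cap T|\ge 1$, which by (c) improves to $\ge 2$.

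Finally I would invoke connectedness to derive the contradiction: take a chain $\fp_0=\fq_1,\ldots,\fq_m=\fp_1$ of minimal primes with $|\G(\fq_j+\fq_{j+1})|=h+1$. As in Remark \ref{equ}, consecutive primes then share $h-1$ variables, so $\supp(\fq_{j+1})$ arises from $\supp(\fq_j)$ by a single-variable exchange, whence the integer $|\supp(\fq_j)\cap T|$ changes by at most one along the chain. Starting at $0$ and ending at a value $\ge 2$, it must equal $1$ at some step, contradicting (c). Hence $I$ is the squarefree Veronese ideal of degree $d$ on $x_1,\ldots,x_n$. I expect the delicate points to be the reduction to the fully supported case and, above all, the simultaneous construction of the two extreme primes $\fp_0$ and $\fp_1$; once these are secured, the single-variable-exchange description of connectedness reduces the final step to a routine discrete intermediate-value argument.
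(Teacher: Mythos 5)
Your proof is correct and follows essentially the same route as the paper's: apply Lemma \ref{loc} to a set $T$ all of whose $(t-1)$-element subsets divide generators, produce one minimal prime disjoint from $T$ (via properness of the monomial localization $I(\fp_{\{1,\ldots,t\}})$) and one meeting $T$ in at least two variables, then walk along the connectedness chain to reach a contradiction with condition (c). The only difference is cosmetic: the paper organizes the existence of $T$ by induction on subset size, while you take a minimal bad subset, which is the same argument in contrapositive packaging.
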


\begin{proof}
If $I$ is squarefree Veronese ideal, then $I$ is matroidal ideal and
connected in codimension one by the explanation  after Remark
\ref{def}. Assume that $I$ is a matroidal ideal generated in degree
$d$ and is connected in codimension one. We use induction on $i$,
$1\leq i\leq d$ to  show that for any  set
$\{x_{j_1},\ldots,x_{j_i}\}\subseteq\supp(I)$, there exists $u\in
\G(I)$ such that $x_{j_1}\cdots x_{j_i}|u$.

Our claim is trivial for $i=1$. Assume that it's true for $i=t-1$
and assume contrary that $t\leq d$ and
$\{x_{1},\ldots,x_{t}\}\subseteq\supp(I)$ and  $x_{1}\cdots
x_{t}\nmid u$ for all $u\in\G(I)$. By induction assumption, for any
subset $\{x_{r_1},\ldots, x_{r_{t-1}}\}$ of $t-1$ elements of
$\{x_1,\ldots,x_t\}$, $x_{r_1}\cdots x_{r_{t-1}}|u$ for some
$u\in\G(I)$.   Note that by Lemma \ref{loc},
$I(\fp_{\{1,\ldots,t\}})=I(\fp_{\{1,\ldots,t-1\}})$ and
$I(\fp_{\{1,\ldots,t-1\}})\neq S$, since $t-1<d$ and $x_1\cdots
x_{t-1}|u$ for some $u\in\G(I)$. Hence there exists
$\fq\in\Ass(S/I)$ such that $\{x_1,\ldots,x_t\}\cap
\supp(\fq)=\emptyset$. Let $\fp\in\Ass(S/I)$ with $x_1\in\fp$. Since
$I$ is connected in codimension one by Remark \ref{equ}, there
exists a chain $\fp=\fp_1,\ldots,\fp_r=\fq$ of associated prime
ideals of $I$ such that $|\G(\fp_s)\cap \G(\fp_{s+1})|=\h(I)-1$ for all
$1\leq s\leq r-1$. As $x_1\in\fp_1$, by Lemma \ref{loc} we have $|\supp(\fp_1)\cap\{x_1,\ldots,x_t\}|\geq 2$. On the other hand, $|\G(\fp_1)\cap \G(\fp_{2})|=\h(I)-1$. Therefore
$|\supp(\fp_2)\cap\{x_1,\ldots,x_t\}|\geq 2$. Continuing in this way, we get $|\supp(\fq)\cap\{x_1,\ldots,x_t\}|\geq 2$, a contradiction.
\end{proof}

By Remark \ref{equ}, matroidal ideals connected in codimension one
are unmixed. The following example shows that this is not true for
polymatroidal ideals which are connected in codimension one.

\begin{ex}\label{exam-c2}\em{
 The ideal $I=(x_1^3,x_1^2x_2,x_1^2x_3,x_1x_2x_3,x_1x_2^2)=(x_1)\cap(x_1,x_2)^2\cap(x_1,x_2,x_3)^3$
 is polymatroidal which is clearly connected in codimension one,  but
it is not unmixed. }
\end{ex}

In our main result Theorem \ref{gc},  we show that an unmixed
 polymatroidal ideal is connected in codimension one if and only if
it is Cohen-Macaulay. We will use the following easy lemma, in our
proof.

\begin{lem}\label{h}
Let $I\subset k[x_1,\ldots,x_n]$ be an unmixed fully supported
polymatroidal ideal with $\h(I)>1$. If $I$ is not squarefree, then
$\h(I)\neq n-1$.
\end{lem}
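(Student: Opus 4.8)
The plan is to prove the contrapositive: assuming $\h(I)=n-1$ (together with the standing hypotheses that $I$ is unmixed, fully supported, polymatroidal with $\h(I)>1$), I will show that $I$ is in fact a squarefree Veronese ideal, and in particular squarefree.

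First I would fix the primary decomposition from Proposition \ref{poly}. Since $I$ is unmixed and $\h(I)=n-1<n$, the maximal ideal $\fm$ cannot be associated (its height is $n$), so $s=0$ and $I=\fp_1^{a_1}\cap\cdots\cap\fp_r^{a_r}$ with $\h(\fp_i)=n-1$ for every $i$. A monomial prime of height $n-1$ is generated by all but one variable, so each $\fp_i=\fp_{\{k_i\}}=(x_j;j\neq k_i)$; since the $\fp_i$ are distinct, the indices $k_i$ are pairwise distinct. The decisive structural step is then to read off that $I$ is an ideal of Veronese type. A degree-$d$ monomial $u=\prod_j x_j^{b_j}$ lies in $\fp_{\{k_i\}}^{a_i}$ exactly when $\sum_{j\neq k_i}b_j=d-b_{k_i}\geq a_i$, i.e. $b_{k_i}\leq d-a_i$. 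As $I$ is generated in its single degree $d$, intersecting over $i$ identifies $I=I_{d;c_1,\ldots,c_n}$, where $c_{k_i}=d-a_i$ and $c_j=d$ for the remaining indices.

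Once $I$ is recognized as an ideal of Veronese type, I would invoke \cite[Theorem 3.4]{V}: an unmixed ideal of Veronese type is Cohen-Macaulay. Then Theorem \ref{CM} forces $I$ to be a principal ideal, a Veronese ideal, or a squarefree Veronese ideal. The two height hypotheses eliminate the first two cases: a principal ideal has height $1$, contradicting $\h(I)>1$, and a Veronese ideal equals $\fm^d$ and has height $n$, contradicting $\h(I)=n-1$. Hence $I$ is a squarefree Veronese ideal, which is squarefree, giving exactly the conclusion sought.

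The main obstacle, and the only place where the hypothesis $\h(I)=n-1$ is genuinely used, is the structural identification of $I$ as a Veronese type ideal. For a general height $h$, each associated prime omits $n-h$ variables and the intersection of their powers need not be of Veronese type; it is precisely the corank-one feature (each $\fp_i$ omitting a single variable, the $k_i$ distinct) that collapses the intersection into the single-variable exponent constraints $b_{k_i}\leq d-a_i$. I would also verify the routine points underlying this reduction: that $a_i\leq d$, so $c_{k_i}\geq 0$, and in fact $a_i<d$ by full support (otherwise $x_{k_i}$ would drop out of $\supp(I)$), and that the degree-$d$ component of the intersection coincides with the generating set of $I_{d;c_1,\ldots,c_n}$, so that the two ideals, each generated in degree $d$, indeed agree.
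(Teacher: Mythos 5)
Your proof is correct, but it follows a genuinely different route from the paper's. The paper argues by contradiction with a quick depth computation: if $\h(I)=n-1$, then Theorem \ref{CM} shows $S/I$ is not Cohen--Macaulay (it cannot be principal since $\h(I)>1$, cannot be a fully supported Veronese ideal $\fm^d$ since $\h(I)<n$, and is not squarefree Veronese by hypothesis), while $\dim(S/I)=1$; hence $\depth(S/I)=0$, which forces $\fm\in\Ass(S/I)$, contradicting unmixedness together with $\h(I)=n-1$. You instead exploit the corank-one structure directly: using Proposition \ref{poly} and unmixedness to write $I=\fp_1^{a_1}\cap\cdots\cap\fp_r^{a_r}$ with each $\fp_i$ omitting a single distinct variable, you correctly translate membership in $\fp_{\{k_i\}}^{a_i}$ for a degree-$d$ monomial into the bound $\deg_{x_{k_i}}(u)\le d-a_i$, identify $I$ as the Veronese-type ideal $I_{d;c_1,\ldots,c_n}$, and then invoke Vl\u{a}doiu's theorem (\cite[Theorem 3.4]{V}, unmixed $\Leftrightarrow$ Cohen--Macaulay for Veronese type) followed by Theorem \ref{CM} to conclude $I$ is squarefree Veronese. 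Interestingly, both arguments hinge on Theorem \ref{CM} but in opposite directions: the paper uses it to deny Cohen--Macaulayness and derive a depth contradiction, whereas you use it after establishing Cohen--Macaulayness to pin down the exact form of $I$. Your approach is longer and leans on the external result from \cite{V}, but it buys a stronger conclusion (a complete structural identification of unmixed fully supported polymatroidal ideals of height $n-1$ as squarefree Veronese ideals), while the paper's argument is essentially self-contained beyond standard facts about depth and associated primes.
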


\begin{proof}
Assume  that $\h(I)=n-1$. Then  $S/I$ is not Cohen-Macaulay by
Theorem \ref{CM} and $\dim(S/I)=1$. Therefore   $\depth(S/I)=0$ and
so $\fm\in\Ass(S/I)$ which contradicts $\h(I)=n-1$ and the
assumption that $I$ is unmixed.
\end{proof}

 Now, we present the main result of this section, which states that

\begin{thm}\label{gc}
Let $I$ be an unmixed  polymatroidal ideal. Then  $I$ is connected
in codimension one if and only if  $I$ is Cohen-Macaulay.
\end{thm}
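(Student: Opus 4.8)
\textbf{The plan.} The implication ``Cohen--Macaulay $\Rightarrow$ connected in codimension one'' is already recorded in Remark \ref{def}, so the whole content is the converse. By Theorem \ref{CM} it suffices to identify $I$ as a principal ideal, a Veronese ideal, or a squarefree Veronese ideal. First I would reduce to the fully supported case: passing to the polynomial ring on $\supp(I)$ changes neither the polymatroidal property, nor unmixedness, nor the Cohen--Macaulay property, nor connectedness in codimension one (the extra variables are a polynomial extension and do not alter $\Min(S/I)$). Since $I$ is unmixed we have $\Ass(S/I)=\Min(S/I)$, and Proposition \ref{poly} gives $I=\fp_1^{a_1}\cap\cdots\cap\fp_r^{a_r}$ after discarding the $\fm$-component when $\fm\notin\Ass(S/I)$. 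Two cases are immediate. If $\Min(S/I)=\{\fm\}$, then Proposition \ref{poly} forces $I=\fm^{s}$, a Veronese ideal. If $\h(I)=1$, then every minimal prime is a single variable and full support forces $I=(x_1^{a_1}\cdots x_n^{a_n})$, a principal ideal. Thus I may assume $r\geq 2$, all $\fp_i\neq\fm$, and $h:=\h(I)\geq 2$.

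In this remaining range the goal is to prove that $I$ is \emph{squarefree}, i.e.\ all $a_i=1$: once this is known, $I$ is matroidal and connected in codimension one, so Theorem \ref{pc} identifies it as a squarefree Veronese ideal, which is Cohen--Macaulay by Theorem \ref{CM}. To prove squarefreeness I would argue by contradiction and induct on $n-h$, using Lemma \ref{h} as the terminal case. Suppose $I$ is not squarefree, so some $a_{i_0}\geq 2$; by Lemma \ref{h} we already have $h\neq n-1$, hence $h\leq n-2$ and $n-h\geq 2$. Since $\h(\fp_{i_0})=h<n$, choose a variable $x_j\notin\fp_{i_0}$ and form the monomial localization $I'=I(\fp_{\{j\}})$ in $S'=k[x_i:i\neq j]$. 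By \cite[Corollary 3.2]{HRV} the ideal $I'$ is again polymatroidal, and by Remark \ref{Q}(a) its primary decomposition is $I'=\bigcap_{x_j\notin\fp_i}\fp_i^{a_i}$, where each surviving $\fp_i$ keeps height $h$ in $S'$. In particular $\fp_{i_0}^{a_{i_0}}$ survives with $a_{i_0}\geq 2$, so $I'$ remains non-squarefree, while the number of variables drops by one and $h$ is unchanged, i.e.\ $n-h$ strictly decreases.

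The delicate part -- and what I expect to be the \textbf{main obstacle} -- is to check that the localized ideal $I'$ still satisfies all the standing hypotheses, namely that it is fully supported on $\{x_i:i\neq j\}$, unmixed (so that no new associated prime, in particular no embedded $\fm'$, is created and all surviving primes keep the common height $h$), and still connected in codimension one. Unmixedness and control of $\Ass(S'/I')$ should follow from Remark \ref{Q} together with Proposition \ref{poly} applied to the polymatroidal ideal $I'$; full support can be arranged by choosing $x_j$ appropriately and, if necessary, restricting again to $\supp(I')$ without disturbing $h$. The genuinely subtle point is the inheritance of connectedness in codimension one: I must show that deleting the minimal primes containing $x_j$ leaves a strongly connected system of height-$h$ primes, and I would establish this combinatorially from the exchange property, reconnecting any two surviving primes by modifying a connecting chain of the original ideal so as to avoid $x_j$. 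Granting this, the induction reduces any hypothetical non-squarefree example to one of height $n'-1$ in $n'$ variables, contradicting Lemma \ref{h}. Hence $I$ is squarefree, and the proof concludes via Theorem \ref{pc} and Theorem \ref{CM} as above.
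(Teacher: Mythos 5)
Your reductions (to the fully supported case with $r\geq 2$, all $\fp_i\neq\fm$ and $h\geq 2$, and the observation that it then suffices to prove $I$ squarefree and quote Theorem \ref{pc} and Theorem \ref{CM}) are sound and parallel the paper's. But the step you yourself flag as the ``main obstacle'' is a genuine gap, not a technicality to be granted: you never prove that the localization $I'=I(\fp_{\{j\}})$ is still connected in codimension one, and this claim is false for monomial ideals in general, so it cannot follow from soft arguments. Concretely, the Stanley--Reisner ideal of the pure, strongly connected complex on $\{1,\ldots,5\}$ with facets $\{1,2,3\},\{2,3,4\},\{3,4,5\},\{1,4,5\}$ is $(x_4,x_5)\cap(x_1,x_5)\cap(x_1,x_2)\cap(x_2,x_3)$: it is unmixed of height $2$ and connected in codimension one, but the substitution $x_1\mapsto 1$ leaves exactly the components $(x_4,x_5)$ and $(x_2,x_3)$, whose sum needs $4\neq h+1$ generators, so the localization is \emph{not} connected in codimension one. (This is precisely why Hartshorne treats ``locally connected in codimension one'' as a separate notion; Remark \ref{hart} only goes the other way.) Hence any proof of your inheritance claim must exploit the polymatroidal exchange property in an essential way, and no such argument is offered --- only the hope of ``reconnecting chains.'' Note moreover that once Theorem \ref{gc} is known, your claim is immediate (the ideal is squarefree Veronese, whose localizations are again squarefree Veronese), so the deferred claim carries essentially the full weight of the theorem: the proposal restates the difficulty rather than resolving it.

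There is a second, unacknowledged defect in the descent on $n-h$: with your choice of $x_j$ (only $x_j\notin\fp_{i_0}$), it can happen that $\fp_{i_0}^{a_{i_0}}$ is the \emph{only} surviving component. Then $I'$, restricted to its support, is a Veronese ideal --- non-squarefree, unmixed, fully supported, polymatroidal and connected in codimension one --- so no contradiction with Lemma \ref{h} ever materializes and the induction simply halts. To keep $r'\geq 2$ you would need to pick $\fp_k$ adjacent to $\fp_{i_0}$ and choose $x_j\notin\fp_{i_0}+\fp_k$, which is possible since $|\G(\fp_{i_0}+\fp_k)|=h+1\leq n-1$ by Lemma \ref{h}. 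It is instructive to compare with how the paper kills both problems at once: instead of localizing at a single variable's complement, it localizes at $\fp=\fp_i+\fp_j$ for \emph{adjacent} minimal primes with $a_i>1$ (again using Lemma \ref{h} to get $\fp\neq\fm$). Then $\supp(I(\fp))=\G(\fp_i)\cup\{x\}$ has only $h+1$ elements, so every height-$h$ associated prime of $I(\fp)$ is forced to share $h-1$ generators with every other one; connectedness in codimension one of the localized ideal thus comes for free from a support count, with no inheritance lemma needed. The induction there is on the generating degree $d$ (which strictly drops because $\fp\neq\fm$ and $I$ is fully supported), with Lemma \ref{poly2} as the base case $d=2$, and the contradiction is that $I(\fp)$ has at least two associated primes yet is non-squarefree, while the inductive hypothesis plus Theorem \ref{CM} would force it to have exactly one.
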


\begin{proof}
If $I$ is Cohen-Macaulay, then $I$ is connected in codimension one
by Remark \ref{def}. Now let
$I=\fp_1^{a_1}\cap\cdots\cap\fp_r^{a_r}$ be connected in codimension
one. We may assume that $I$ is  fully supported with $\h(I)>1$  and
 is not squarefree,  by Theorem \ref{CM} and Theorem \ref{pc}.
Therefore to prove that $I$ is Cohen-Macaulay, according to Theorem
\ref{CM}, we must show that $r=1$. We use induction on $d$, which is
the common degree of monomial generators of $I$. For $d=2$, the
result follows by Lemma \ref{poly2}. Let $d>2$ and $a_i>1$ for some
$1\leq i\leq r$ and assume contrary that $r>1$. Since $I$ is
connected in codimension one by Remark \ref{equ}, there exist $1\leq
j\neq i\leq r$ such that $|\G(\fp_i)\cap\G(\fp_j)|=\h(I)-1$. Note
that $\h(I)\neq n-1$ by Lemma \ref{h} and so  $\fp:=\fp_i+\fp_j\neq
\fm$. On the other hand $\supp(I(\fp))=\G(\fp_i)\cup\{x\}$ for some
variable $x\in\fp_j\setminus\fp_i$.
 Now let $\fq\in\Ass(S/I(\fp))$ and $\fq\neq\fp$. Then  $\G(\fq)\subseteq\G(\fp_i)\cup\{x\}$. Since $\h(\fp_i)=\h(\fq)$,
 then $\fq=(\G(\fp_i)\setminus\{y_\fq\},x)$ for some variable $y_\fq$. Hence $I(\fp)$ is a polymatroidal ideal connected in codimension
 one
  which is generated in degree less than $d$.  Now, induction assumption implies  $|\Ass(S/I(\fp))|=1$ which is a contradiction.
\end{proof}

\begin{cor}\label{d=2}
Let $I\subset S$ be an unmixed fully supported polymatroidal ideal
and connected in codimension one. Then $\supp(I(\fp_{\{i\}}))$ is
either an empty set or is equal to
$\{x_1,\ldots,x_n\}\setminus\{x_i\}$ for each $i=1,\ldots,n$.
\end{cor}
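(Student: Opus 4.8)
The plan is to reduce immediately to the classification of Cohen--Macaulay polymatroidal ideals and then check the assertion case by case. Since $I$ is unmixed and connected in codimension one, Theorem \ref{gc} shows that $I$ is Cohen--Macaulay, so by Theorem \ref{CM} the ideal $I$ is a principal ideal, a Veronese ideal, or a squarefree Veronese ideal. Because $I$ is fully supported, a Veronese ideal must be $\fm^d$, a principal ideal must be generated by a monomial $u=x_1^{b_1}\cdots x_n^{b_n}$ with every $b_k\geq 1$, and a squarefree Veronese ideal must be generated by all squarefree monomials of degree $d$ in the full variable set (forcing $d\leq n$). I would then verify the claimed dichotomy for $\supp(I(\fp_{\{i\}}))$ in each of these three cases, the main tool being the explicit description of the generators of a monomial localization in Remark \ref{Q}(b).

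First I would record the easy half: Remark \ref{Q}(b) always gives $x_i\notin\supp(I(\fp_{\{i\}}))$, since every surviving generator $u/x_i^{a_i}$ has $x_i$-degree zero; hence $\supp(I(\fp_{\{i\}}))\subseteq\{x_1,\ldots,x_n\}\setminus\{x_i\}$, and only the reverse inclusion (or emptiness) needs attention. It is also convenient to note the unifying principle that $I(\fp_{\{i\}})$ is generated in degree $d_i=d-a_i$, so the support is empty exactly when $d_i=0$, i.e. when the localization equals $S$. For $I=\fm^d$ one has $a_i=d$, so $d_i=0$ and the support is empty. For a principal ideal $I=(u)$ with all $b_k\geq 1$, the localization is generated by $u/x_i^{b_i}$, whose support is exactly $\{x_k : k\neq i\}$ when $n\geq 2$, and empty when $n=1$; these match the two permitted outcomes.

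The only case requiring a genuine argument is the squarefree Veronese ideal, where $a_i=1$ and the generators of $I(\fp_{\{i\}})$ are the squarefree monomials $u/x_i$ of degree $d-1$ in the variables $\{x_k : k\neq i\}$. If $d=1$ this localization is $S$ and the support is empty; if $d\geq 2$ I must show that every $x_k$ with $k\neq i$ divides some such generator, i.e. that there is a squarefree monomial of degree $d-1$ in $\{x_k : k\neq i\}$ containing $x_k$. This is possible because there are $n-1$ available variables and $n-1\geq d-1$ (equivalently $n\geq d$), a condition already forced by the existence of a squarefree monomial of degree $d$ in $n$ variables; so one simply adjoins $d-2$ further variables from $\{x_k : k\neq i,\,k\neq\text{(index of }x_k)\}$. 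Hence the support is the full complement $\{x_1,\ldots,x_n\}\setminus\{x_i\}$. I expect this small counting step to be the only non-formal point: the principal and Veronese cases fall out immediately from the generator description, and the whole corollary is essentially a bookkeeping exercise once Theorem \ref{gc} and Theorem \ref{CM} have collapsed $I$ into one of the three explicit shapes.
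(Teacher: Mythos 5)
Your proposal is correct and follows essentially the same route as the paper: apply Theorem \ref{gc} to conclude $I$ is Cohen--Macaulay, invoke the classification in Theorem \ref{CM}, and then verify the dichotomy for $\supp(I(\fp_{\{i\}}))$ in each of the three cases, the only substantive point being that for a fully supported squarefree Veronese ideal of degree $d\geq 2$ every product $x_ix_j$ divides some minimal generator. The paper's proof is just a terser version of this same case analysis.
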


\begin{proof}
If $I$ is a squarefree Veronese ideal in variables $x_1,\ldots,x_n$,
then  for all $1\leq i,j\leq n$, $x_ix_j|u$ for some $u\in\G(I)$.
Hence the result is clear by Theorem \ref{gc} and Theorem \ref{CM}.
\end{proof}

\begin{cor}\label{co}
Let $I$ be a polymatroidal ideal. Then $I$ satisfies  Serre's
condition $(S_n)$ for some $n\geq 2$ if and only if $I$ is
Cohen-Macaulay.
\end{cor}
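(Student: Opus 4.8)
The plan is to reduce everything to Theorem \ref{gc}. One implication is immediate: a Cohen-Macaulay ring satisfies Serre's condition $(S_n)$ for every $n$, so if $I$ is Cohen-Macaulay then $I$ satisfies $(S_n)$ for some (indeed every) $n\geq 2$. All the content lies in the converse, and here I would first observe that it suffices to treat the case $n=2$: for $n\geq 2$ one has $\min(n,t)\geq\min(2,t)$ for every $t$, so $(S_n)$ implies $(S_2)$, and a fortiori $(S_1)$.

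I would then extract two consequences of $(S_2)$ for the ring $S/I$. From $(S_1)$, the ring $S/I$ has no embedded primes, that is $\Ass(S/I)=\Min(S/I)$. From $(S_2)$ applied to the localization $(S/I)_\fm$, Hartshorne's connectedness theorem \cite{Ha} gives that $(S/I)_\fm$ is connected in codimension one; combined with Remark \ref{hart} this yields that $I$ itself is connected in codimension one. This is exactly the argument already run for the Cohen-Macaulay case in Remark \ref{def}, now carried out with the weaker hypothesis $(S_2)$ in place of the Cohen-Macaulay property, using the compatibility between Hartshorne's topological notion and the combinatorial definition of this paper established through \cite[Proposition 1.1]{Ha}.

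By Remark \ref{equ}, being connected in codimension one forces $I$ to be equidimensional, so all minimal primes of $I$ share a common height; together with $\Ass(S/I)=\Min(S/I)$ from the previous step, this upgrades to $I$ being unmixed. Thus $I$ is an unmixed polymatroidal ideal that is connected in codimension one, and Theorem \ref{gc} concludes that $I$ is Cohen-Macaulay, completing the equivalence.

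The step I expect to be the main obstacle is $(S_2)\Rightarrow$ connected in codimension one, since it is the only genuinely external input: one must invoke the connectedness theorem for $(S_2)$ local rings and handle the local-to-global passage via Remark \ref{hart}, both of which are already available from the preliminaries. As a consistency check I would revisit Example \ref{exam-c2}: that ideal is connected in codimension one but not unmixed, hence has embedded primes and fails $(S_1)$, so it fails $(S_2)$ as well, in agreement with its not being Cohen-Macaulay. This confirms that neither half of the $(S_2)$ hypothesis can be discarded.
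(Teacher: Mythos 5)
Your proposal is correct and follows essentially the same route as the paper's own proof: both deduce connectedness in codimension one from $(S_2)$ via Hartshorne's result \cite[Corollary 2.4]{Ha} together with Remark \ref{hart}, then obtain equidimensionality from Remark \ref{equ}, upgrade to unmixedness using $(S_1)$, and conclude by Theorem \ref{gc}. The only differences are presentational: you spell out the reduction to $n=2$ and the trivial converse direction, which the paper leaves implicit.
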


\begin{proof}
Assume that $I$ satisfies  Serre's condition $(S_n)$ for some
$n\geq 2$. Then $I$ is connected in codimension one by
\cite[Corollary 2.4]{Ha} and Remark \ref{hart}. Hence  $I$ is
equidimensional by Remark \ref{equ} and so $I$ is unmixed since it
is $(S_1)$. Now the result follows by Theorem \ref{gc}.
\end{proof}

\section{Generalized Cohen-Macaulay polymatroidal ideals}

A finitely generated module $M$ over a local ring $(R,\fn)$ is
called \emph{generalized Cohen-Macaulay},  whenever each local
cohomology module $\H^i_\fn(M)$ has  finite length for all $i < \dim
M$. It is known that if $M$ is generalized Cohen-Macaulay, then
$M_\fp$ is Cohen-Macaulay for all prime ideals
$\fp\in\Spec(R)\setminus\{\fn\}$ and the converse holds if $R$ is
universally catenary and all its formal fibres are Cohen-Macaulay
\cite[Exercises 9.5.7 and 9.6.8]{BS}. In the following we consider
graded generalized Cohen-Macaulay modules over the $^*$local graded
polynomial ring $(S,\fm)$. We call an ideal $I$  generalized
Cohen-Macaulay whenever  the $i$th cohomology module $\H^i_\fm(S/I)$
is  of finite length for all $i<\dim(S/I)$.

\begin{lem}\label{eq}
The following statements are equivalent for a monomial ideal $I$.
\begin{enumerate}
\item[a)] $I$ is generalized Cohen-Macaulay.
\item[b)] $I$ is equidimensional and $I(\fp)$ is Cohen-Macaulay for all monomial prime ideals $\fp\neq
\fm$.
\end{enumerate}
\end{lem}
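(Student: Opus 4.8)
The plan is to prove the equivalence of the two conditions by relating the finiteness of the local cohomology modules $\H^i_\fm(S/I)$ for $i < \dim(S/I)$ to the Cohen-Macaulay property of all the monomial localizations. The natural tool is the characterization that a module $M$ over a local (or $^*$local graded) ring is generalized Cohen-Macaulay if and only if it is equidimensional and $M_\fp$ is Cohen-Macaulay for every prime $\fp$ distinct from the maximal ideal, which is quoted from \cite[Exercises 9.5.7 and 9.6.8]{BS} in the paragraph preceding the lemma. The whole task is then to translate the condition "$M_\fp$ Cohen-Macaulay for all $\fp \neq \fm$" into the monomial statement "$I(\fp)$ Cohen-Macaulay for all monomial prime $\fp \neq \fm$."

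First I would record that for a monomial ideal $I$ and a monomial prime $\fp$, the monomial localization $I(\fp)$ satisfies $S/I(\fp)$ localized (in the ordinary sense) at a suitable prime having the same Cohen-Macaulay behaviour as $(S/I)_\fp$, because $IS_\fp = I(\fp)S_\fp$ by the definition of monomial localization given in the excerpt, and the extra variables $x_{i_1},\dots,x_{i_t}$ that were set to $1$ become units after localizing. Thus the Cohen-Macaulayness of $S/I$ at a monomial prime is exactly the Cohen-Macaulayness of the monomial localization $I(\fp)$ at its maximal graded ideal. The direction $(a)\Rightarrow(b)$ then follows almost immediately: generalized Cohen-Macaulay forces equidimensionality, and $(S/I)_\fp$ is Cohen-Macaulay for every $\fp \neq \fm$, which via the localization identity gives that $I(\fp)$ is Cohen-Macaulay for every monomial prime $\fp \neq \fm$.

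For the converse $(b)\Rightarrow(a)$, I would use that it suffices to check $(S/I)_\fp$ is Cohen-Macaulay only at \emph{monomial} primes rather than all primes, since the associated primes and support of a monomial ideal are monomial, and Cohen-Macaulayness of a localization at an arbitrary prime $\fq$ can be detected by the largest monomial prime contained in $\fq$ (the localization of a monomial ideal only "sees" the variables, so a nonmonomial prime contributes units). More carefully, I would argue that if $\fq$ is any prime with $I \subseteq \fq \neq \fm$, then $(S/I)_\fq$ is a localization of $(S/I(\fp))$ for the monomial prime $\fp = \fp_{\{i : x_i \notin \fq\}}$, and since $I(\fp)$ is Cohen-Macaulay and Cohen-Macaulayness passes to further localizations, $(S/I)_\fq$ is Cohen-Macaulay. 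Combined with the hypothesis that $I$ is equidimensional, the cited characterization from \cite{BS} then yields that $I$ is generalized Cohen-Macaulay.

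The main obstacle will be the bookkeeping in the reduction from arbitrary primes to monomial primes: one must verify precisely that a nonmonomial prime $\fq$ containing $I$ can be handled by the monomial prime obtained from the variables it contains, and that passing to the monomial localization $I(\fp)$ correctly captures the local ring $(S/I)_\fq$ up to a further (Cohen-Macaulayness-preserving) localization. This requires care because the substitution $x_i \mapsto 1$ that defines $I(\fp)$ lives in a smaller polynomial ring, so I would need to track the extra polynomial variables as a Cohen-Macaulay base change and confirm that the graded $^*$local setup of $(S,\fm)$ legitimately lets us invoke the local-cohomology criterion. Once that dictionary is set up cleanly, both implications are short.
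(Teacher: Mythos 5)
Your proposal is correct, and it rests on the same backbone as the paper's proof, namely the criterion from \cite[Exercises 9.5.7 and 9.6.8]{BS} that generalized Cohen--Macaulayness is equivalent to equidimensionality together with Cohen--Macaulayness of $(S/I)_\fq$ at every prime $\fq\neq\fm$. Where you genuinely diverge is in the reduction from arbitrary primes to monomial primes: the paper disposes of this in one line by citing \cite[Corollary 3.7]{PS}, which says that the minimal elements of the non-Cohen--Macaulay locus of $S/I$ are homogeneous in the multigraded structure, hence monomial; you instead argue by hand that for a non-monomial prime $\fq\supseteq I$ with $\fq\neq\fm$, the ring $(S/I)_\fq$ is a further localization of a polynomial extension of $S'/I(\fp)$, where $\fp$ is the monomial prime generated by the variables lying in $\fq$ (note $\fp\neq\fm$, since $\fp=\fm$ would force $\fq=\fm$), so that Cohen--Macaulayness of $I(\fp)$ propagates to $(S/I)_\fq$ via flat base change and localization. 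Both routes are sound. Your version buys self-containedness: it avoids the Cousin-complex machinery of \cite{PS} entirely and uses only the standard facts that Cohen--Macaulayness of a graded quotient can be checked at the graded maximal ideal, is preserved under polynomial extension, and passes to localizations. The cost is exactly the bookkeeping you flag at the end: you must verify the identity $IS_\fq=I(\fp)S_\fq$, track the discarded variables as a regular (hence Cohen--Macaulay) fiber of a flat extension, and justify invoking the local criterion of \cite{BS} in the $^*$local graded setting $(S,\fm)$ --- steps the paper silently absorbs into its two citations. If you write these details out carefully (in particular the two-way dictionary ``$(S/I)_\fp$ is Cohen--Macaulay if and only if $I(\fp)$ is Cohen--Macaulay'' for monomial $\fp$, which you also need in the direction (a)$\Rightarrow$(b)), your argument is a complete and somewhat more elementary substitute for the published proof.
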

\begin{proof}
   Note that homogeneous prime ideals of $S$ in  multigraded structure are precisely monomial prime ideals. Thus all minimal elements
   of the non Cohen-Macaulay locus of $S/I$ are monomial by \cite[Corollary 3.7]{PS}. Now the result follows by  \cite[Exercise 9.5.7]{BS}.
\end{proof}

\begin{prop}\label{2}
Let $I$ be a polymatroidal ideal generated in degree 2. Then the following statements are equivalent:
\begin{enumerate}
\item[a)] $I$ is equidimensional.
\item[b)] $I$ is generalized Cohen-Macaulay.
\end{enumerate}
\end{prop}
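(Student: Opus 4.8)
The plan is to read both implications off Lemma \ref{eq}, which identifies generalized Cohen--Macaulayness with being equidimensional together with having Cohen--Macaulay monomial localizations away from $\fm$. The implication (b)$\Rightarrow$(a) is then immediate: if $I$ is generalized Cohen--Macaulay, Lemma \ref{eq} already forces $I$ to be equidimensional, so the whole content lies in (a)$\Rightarrow$(b). Throughout I would work with $I$ fully supported, which is the relevant setting for the statement; the non-fully-supported case has to be read inside $k[\supp(I)]$, since adjoining a variable on which $I$ does not depend spreads any failure of the Cohen--Macaulay property of the core over a positive-dimensional locus and so destroys generalized Cohen--Macaulayness. Assuming $I$ equidimensional and fully supported, by Lemma \ref{eq} it suffices to show that $I(\fp)$ is Cohen--Macaulay for every monomial prime ideal $\fp\neq\fm$.

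The key step is to prove that $I(\fp)$ is generated in degree at most $1$ whenever $\fp\neq\fm$. By \cite[Corollary 3.2]{HRV} the monomial localization $I(\fp)$ is again polymatroidal, hence generated in a single degree $d'$, and since each minimal generator of $I(\fp)$ is the image of some generator of $I$ under the substitution defining $I(\fp)$, we have $d'\leq 2$. Suppose for contradiction that $d'=2$. As $\fp\neq\fm$, at least one variable $x_k$ is sent to $1$ in forming $I(\fp)$; because $I$ is fully supported, $x_k$ divides some $u\in\G(I)$, and since $\deg u=2$ we get $u=x_kx_l$ or $u=x_k^2$. The image of $u$ then lies in $I(\fp)$ and has degree at most $1$. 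But an ideal generated in degree $2$ contains no nonzero element of degree less than $2$, a contradiction. Hence $d'\leq 1$.

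It remains to observe that this forces $I(\fp)$ Cohen--Macaulay: a polymatroidal ideal generated in degree at most $1$ is either the unit ideal or an ideal generated by a subset of the variables, so $S/I(\fp)$ is zero or a polynomial ring, which is Cohen--Macaulay. Applying Lemma \ref{eq} in the reverse direction, together with the standing hypothesis that $I$ is equidimensional, yields that $I$ is generalized Cohen--Macaulay, completing (a)$\Rightarrow$(b). The main obstacle I anticipate is precisely the degree-drop claim: it is what converts the a priori delicate requirement that all proper monomial localizations be Cohen--Macaulay into the trivial degree-$\leq 1$ situation, and it hinges essentially on $I$ being fully supported, so that every inverted variable genuinely occurs in a degree-$2$ generator.
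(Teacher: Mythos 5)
Your proposal is correct and follows essentially the same route as the paper: both reduce the statement to Lemma \ref{eq} and then observe that, since $I$ is (fully supported and) generated in degree $2$, every monomial localization $I(\fp)$ with $\fp\neq\fm$ is polymatroidal of degree at most $1$, hence generated by indeterminates or equal to $S$, and therefore Cohen--Macaulay. You merely spell out the details the paper leaves implicit (the single-degree property of localizations via \cite[Corollary 3.2]{HRV}, the role of full support, and checking all primes $\fp\neq\fm$ rather than just the $\fp_{\{i\}}$), which is a faithful expansion rather than a different argument.
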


\begin{proof}
Note  that $I(\fp_{\{i\}})$ is generated by indeterminates or is
equal to $S$, and so it is Cohen-Macaulay for all $i=1,\ldots,n$.
\end{proof}

The following example shows that the above result is not true for polymatroidal ideals generated in degree $d>2$.

\begin{ex}\label{1.4}
The ideal
$I=(uxy,uyz,uzw,uxw,xyz,wxz)=(x,u)\cap(x,z)\cap(y,w)\cap(z,u)$ is an
unmixed matroidal ideal. But it is not generalized Cohen-Macaulay,
since $I(\fp_{\{x\}})=(uy,uw,yz,wz)$ is not Cohen-Macaulay by
Theorem \ref{CM}.
\end{ex}

In the above example, $I$ is an unmixed ideal which is not generalized Cohen-Macaulay. As a main result of this section
 we will show in Theorem \ref{mat} that the generalized Cohen-Macaulay matroidal ideals generated in degree $d>2$ are
 precisely  Cohen-Macaulay matroidal ideals.
In order to prove this, we need the following result which is
interesting in its own.

\begin{prop}\label{mat1}
Let $I\subset k[x_1,\ldots,x_n]$ be a fully supported matroidal
ideal generated in degree $d>2$. If $I$ is generalized
Cohen-Macaulay, then
$\supp(I(\fp_{\{i\}}))=\{x_1,\ldots,x_n\}\setminus\{x_i\}$ for each
$i=1,\ldots,n$.
\end{prop}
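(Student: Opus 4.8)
The plan is to reformulate the conclusion as a statement about pairs of variables and then contradict the generalized Cohen--Macaulay hypothesis by exhibiting a non-Cohen--Macaulay monomial localization of degree two. First I record the consequences of the hypothesis. By Lemma \ref{eq}, $I$ is equidimensional and $I(\fp)$ is Cohen--Macaulay for every monomial prime $\fp\neq\fm$; since every monomial localization of a matroidal ideal is again matroidal by \cite[Corollary 3.2]{HRV}, Theorem \ref{CM} shows that each such $I(\fp)$ is principal or a squarefree Veronese ideal. By Remark \ref{Q}(b), $\G(I(\fp_{\{i\}}))=\{u/x_i : u\in\G(I),\ x_i\mid u\}$, so that $\supp(I(\fp_{\{i\}}))=\{x_j : j\neq i,\ x_ix_j\mid u \text{ for some } u\in\G(I)\}$. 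Thus the assertion $\supp(I(\fp_{\{i\}}))=\{x_1,\dots,x_n\}\setminus\{x_i\}$ is equivalent to: for every $j\neq i$ there is a generator $u\in\G(I)$ divisible by $x_ix_j$.

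Next I argue by contradiction. Fix $i$ and suppose some $j\neq i$ satisfies $x_ix_j\nmid u$ for all $u\in\G(I)$. Since $I$ is fully supported, choose $u=x_i x_{e_1}\cdots x_{e_{d-1}}\in\G(I)$; then $j\notin\{e_1,\dots,e_{d-1}\}$. Put $T=\{e_1,\dots,e_{d-2}\}$, which is nonempty because $d>2$, and set $L:=I(\fp_{\{e_1,\dots,e_{d-2}\}})$, the localization obtained by setting the variables of $T$ equal to $1$. The corresponding prime is distinct from $\fm$, so $L$ is Cohen--Macaulay and matroidal. Every generator of $I$ has degree $d$, so its image in $L$ has degree $d-\lvert\supp(w)\cap T\rvert\geq 2$; since $L$ is generated in a single degree and the image of $u$ is $x_ix_{e_{d-1}}$, it follows that $L$ is generated in degree $2$ and is fully supported on $N:=\supp(L)$, with $x_i,x_{e_{d-1}}\in N$. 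Moreover no generator of $L$ is divisible by $x_ix_j$: such a generator would have to equal $x_ix_j$, hence arise from a generator of $I$ divisible by $x_ix_j$, contrary to our assumption.

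The crux is to show $x_j\in N$, equivalently that $x_{e_1}\cdots x_{e_{d-2}}x_j$ divides some $v\in\G(I)$, for then its image in $L$ is a degree-$2$ generator divisible by $x_j$. This is where the matroid underlying $\G(I)$ enters: the assumption $x_ix_j\nmid u$ for all $u$ means that $\{i,j\}$ is contained in no basis, so (as $I$ is loopless by full support) $i$ and $j$ are a parallel pair. Since $T\cup\{i\}\subseteq\supp(u)$ is independent, $i$ does not lie in the closure of $T$, and parallelism then forces $j$ not to lie in the closure of $T$ either; hence $T\cup\{j\}$ is independent and extends to a basis, producing the desired $v$. I expect this step --- guaranteeing that $x_j$ survives into the support of $L$ --- to be the main obstacle, and it is exactly where the hypothesis $d>2$ is used, so that $T$ is nonempty while the localization still records the pair $\{x_i,x_j\}$.

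Finally I assemble the contradiction. The ideal $L$ is a degree-$2$ matroidal ideal, fully supported on $N\supseteq\{x_i,x_j,x_{e_{d-1}}\}$ with $\lvert N\rvert\geq 3$; it has at least two generators, since $x_ix_{e_{d-1}}$ and the degree-$2$ image of $v$ differ (the former is not divisible by $x_j$ as $j\neq i,e_{d-1}$); and no generator of $L$ is divisible by $x_ix_j$. Hence $L$ is neither principal nor the squarefree Veronese ideal on $N$, so by Theorem \ref{CM} it is not Cohen--Macaulay. This contradicts the conclusion that $L=I(\fp_{\{e_1,\dots,e_{d-2}\}})$, a localization at a prime different from $\fm$, must be Cohen--Macaulay, and thereby proves $\supp(I(\fp_{\{i\}}))=\{x_1,\dots,x_n\}\setminus\{x_i\}$ for each $i$.
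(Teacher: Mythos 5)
Your proof is correct, but it follows a genuinely different route from the paper's. The paper localizes only at single variables: assuming $\supp(I(\fp_{\{1\}}))=\{x_{t+1},\ldots,x_n\}$, it invokes the Cohen--Macaulay classification (Theorem \ref{CM}) to see that both $I(\fp_{\{1\}})$ and $I(\fp_{\{t+1\}})$ are squarefree Veronese ideals, uses Lemma \ref{loc} to get $I(\fp_{\{1\}})=I(\fp_{\{j\}})$ for $j=2,\ldots,t$ and hence $\supp(I(\fp_{\{t+1\}}))=\{x_1,\ldots,x_n\}\setminus\{x_{t+1}\}$, and then exploits the \emph{equidimensionality} of $I$ (the other half of Lemma \ref{eq}) to equate the two resulting height formulas, $n-t-d+2=n-d+1$, forcing $t=1$. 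You instead reduce the conclusion to the pairwise statement ``$x_ix_j$ divides some generator for all $j\neq i$,'' and, assuming a forbidden pair exists, you localize much deeper --- setting the $d-2$ variables of $T$ equal to $1$ --- to manufacture a degree-two matroidal ideal $L$ with at least two generators whose support contains $x_i$ and $x_j$ but which misses $x_ix_j$; Theorem \ref{CM} then says $L$ is not Cohen--Macaulay, contradicting Lemma \ref{eq}. Your argument never uses equidimensionality or Lemma \ref{loc}, which is a genuine economy of hypotheses; the step guaranteeing $x_j\in\supp(L)$ is carried by standard matroid theory (a dependent pair of non-loops is a parallel pair, and parallelism plus $i\notin\mathrm{cl}(T)$ gives $j\notin\mathrm{cl}(T)$, so $T\cup\{j\}$ extends to a basis), facts that are sound but rest on the dictionary between $\G(I)$ and the bases of a matroid, which the paper uses only implicitly through the exchange property and never sets up explicitly. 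In short: the paper buys its contradiction from a global height count, while you buy it from a local, explicitly matroid-theoretic obstruction reduced to the degree-two case.
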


\begin{proof}
We show,  for convenience, that
$\supp(I(\fp_{\{1\}}))=\{x_2,\ldots,x_n\}$. Let $I(\fp_{\{1\}})$ be
 fully supported in $K[x_{t+1},\ldots,x_n]$ for some $t\geq 1$. It is enough to show
 that $t=1$. Since $I(\fp_{\{1\}})$ is a squarefree Veronese ideal in variables
 $x_{t+1},\ldots,x_n$
 of degree $d-1$, where $d$ is the common degree of generators of
 $I$,
  it follows that
\begin{equation}\label{dag}
h=\h(I)=(n-t)-(d-1)+1=n-t-d+2.
\end{equation}

Since $d> 2$, we have that for each $j=t+2,\ldots,n$,
 there exists $u_j\in\G(I)$ such that $x_{t+1}x_j|u_j$ and so
\begin{equation}\label{*}
 \{x_{t+2},\ldots,x_n\}\subseteq \supp(I(\fp_{\{t+1\}})).
 \end{equation}
  On the other  hand since $\supp(I(\fp_{\{1\}}))=\{x_{t+1},\ldots,x_n\}$, it follows
 that $x_1x_j\nmid u$ for each $j=2,\ldots,t$ and any $u\in\G(I)$. So Lemma \ref{loc} implies that $I(\fp_{\{1\}})=I(\fp_{\{j\}})$ for
 $j=2,\ldots,t$. Now again since
 $\supp(I(\fp_{\{j\}}))=\{x_{t+1},\ldots,n\}$ for $j=1,\ldots,t$, we
 have  that for each $j=1,\ldots,t$,
 there exists $u_j\in\G(I)$ such that $x_jx_{t+1}|u_j$. Thus
 \begin{equation}\label{**}
\{x_1,\ldots,x_t\}\subseteq \supp(I(\fp_{\{t+1\}})).
\end{equation}
  Hence by (\ref{*}) and (\ref{**}), we have that $\supp(I(\fp_{\{t+1\}}))=\{x_1,\ldots,x_n\}\setminus\{x_{t+1}\}$.
 Therefore since $I(\fp_{\{t+1\}})$ is a squarefree Veronese ideal,
 it follows that  $h=(n-1)-(d-1)+1=n-d+1$. Hence
 from (\ref{dag}), $n-t-d+2=n-d+1$. So $t=1$.
\end{proof}

\begin{thm}\label{mat}
Let $I$ be a matroidal ideal generated in degree $d>2$. Then $I$ is
generalized Cohen-Macaulay if and only if $I$ is Cohen-Macaulay.
\end{thm}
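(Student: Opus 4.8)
The plan is to prove Theorem \ref{mat} by showing both implications, where the forward direction (generalized Cohen-Macaulay implies Cohen-Macaulay) is the substantive one. The reverse implication is immediate: a Cohen-Macaulay ideal is in particular generalized Cohen-Macaulay, since all the lower local cohomology modules vanish and hence trivially have finite length. So I would dispose of that direction in one sentence and concentrate on the converse.

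For the forward direction, I would first reduce to the fully supported case, since $\supp(I)$ being a proper subset just means we work inside a smaller polynomial ring without affecting either hypothesis or conclusion. Assuming $I$ is fully supported, generalized Cohen-Macaulay, and matroidal of degree $d>2$, the goal by Theorem \ref{CM} is to show $I$ is a squarefree Veronese ideal, equivalently that $|\Ass(S/I)|=1$ with all generators of that single prime having the right height, or more directly that $x_{j_1}\cdots x_{j_d} \mid u$ for some $u \in \G(I)$ for every $d$-subset of the variables. The engine here is Proposition \ref{mat1}, which already tells us that $\supp(I(\fp_{\{i\}})) = \{x_1,\ldots,x_n\}\setminus\{x_i\}$ for every $i$; by Remark \ref{Q}(b) this says precisely that for every index $i$ and every other index $j$, there is a generator divisible by $x_i x_j$. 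In other words, every pair of variables appears together in some generator.

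The key step is then to bootstrap from pairs to full $d$-subsets, mirroring the inductive argument in Theorem \ref{pc}. I would run induction on the subset size: the claim that every $i$-subset of variables divides some generator is true for $i=1$ (full support) and for $i=2$ (by Proposition \ref{mat1} as just noted). For the inductive step, suppose every $(t-1)$-subset divides some generator but some $t$-subset $\{x_1,\ldots,x_t\}$ with $t \le d$ does not. Then Lemma \ref{loc} applies: condition (a) holds for this set, so by the equivalence we get condition (c), namely $|\supp(\fp)\cap\{x_1,\ldots,x_t\}|\neq 1$ for all $\fp\in\Ass(S/I)$. The strategy is to derive a contradiction from this, and here I would want to invoke the generalized Cohen-Macaulay hypothesis through Lemma \ref{eq}, which forces each monomial localization $I(\fp)$ with $\fp \neq \fm$ to be Cohen-Macaulay, hence a squarefree Veronese ideal by Theorem \ref{CM}. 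The localizations are precisely the tools that convert the local structure around a partial product into a rigid Veronese shape, and comparing the heights computed two different ways (as in the displayed equations of Proposition \ref{mat1}) should collapse the possibilities.

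The main obstacle I anticipate is exactly this inductive step: unlike Theorem \ref{pc}, where connectedness in codimension one directly supplied a chain of primes to walk along, here the hypothesis is generalized Cohen-Macaulayness, which acts only through the Cohen-Macaulayness of proper localizations. So the delicate part is arranging the right localization $I(\fp_{\{1,\ldots,t-1\}})$ or $I(\fp_{\{i\}})$, checking it is again matroidal and fully supported in its own ring (using \cite[Corollary 3.2]{HRV} and Proposition \ref{mat1}), confirming it inherits the generalized Cohen-Macaulay property or is already Cohen-Macaulay, and then reading off a height or support incompatibility that rules out the assumed $t$-subset gap. Once the induction reaches $t=d$, every $d$-subset of the fully supported variables divides a generator, which means $I$ contains the squarefree Veronese ideal of degree $d$; since $I$ is generated in degree $d$ and squarefree, it must equal that Veronese ideal, and the proof concludes by Theorem \ref{CM}.
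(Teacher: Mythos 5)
Your plan assembles the right ingredients but stops short of a proof at exactly the point you flag yourself: the inductive step from ``every pair of variables divides a generator'' to ``every $d$-subset divides a generator'' is never carried out. Saying that Lemma \ref{eq} and Theorem \ref{CM} ``should collapse the possibilities'' via ``a height or support incompatibility'' is not an argument, and the analogy with Theorem \ref{pc} genuinely breaks down here: that proof propagates the condition $|\supp(\fp_s)\cap\{x_1,\ldots,x_t\}|\geq 2$ along a chain of minimal primes supplied by connectedness in codimension one, and generalized Cohen-Macaulayness supplies no such chain. So, as written, the proposal is a strategy with a hole at its only substantive step.

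The gap is, however, unnecessary, because you are underusing Proposition \ref{mat1}. You extract from it only the weak consequence that $x_ix_j$ divides some generator for all $i\neq j$. Instead, combine it with the other facts you already cite: $I(\fp_{\{i\}})$ is matroidal by \cite[Corollary 3.2]{HRV}, Cohen-Macaulay by Lemma \ref{eq}, hence a squarefree Veronese ideal by Theorem \ref{CM}; and Proposition \ref{mat1} says its support is all of $\{x_1,\ldots,x_n\}\setminus\{x_i\}$. Therefore $I(\fp_{\{i\}})$ is the \emph{entire} squarefree Veronese ideal of degree $d-1$ in those $n-1$ variables. Now lift through Remark \ref{Q}(b) (with $a_i=1$ since $I$ is squarefree): for any $d$-subset $\{x_{j_1},\ldots,x_{j_d}\}$ of the variables, the monomial $x_{j_2}\cdots x_{j_d}$ is a generator of $I(\fp_{\{j_1\}})$, hence $x_{j_1}x_{j_2}\cdots x_{j_d}\in\G(I)$. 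Equivalently, $I=\sum_{i=1}^n x_i I(\fp_{\{i\}})$ is the squarefree Veronese ideal of degree $d$ in $x_1,\ldots,x_n$, and Theorem \ref{CM} finishes. This one-step lifting is precisely the paper's proof; no induction on the size of subsets of variables, and no appeal to Lemma \ref{loc}, is needed.
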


\begin{proof}
By Proposition \ref{mat1}, $I(\fp_{\{i\}})$ is a squarefree Veronese ideal in the variables
$\{x_1,\ldots,x_n\}\setminus\{x_i\}$, for all $1\leq i\leq n$. Now, since $I=\sum^n_{i=1}x_iI(\fp_{\{i\}})$, the result is clear.
\end{proof}

The following lemma will be used in the classification of generalized Cohen-Macaulay polymatroidal ideals in Theorem \ref{th}.
\begin{lem}\label{exc}
Let $I=J\cap \fm^d$ be a polymatroidal ideal generated in degree $d$ where $J$ is a squarefree monomial ideal.
If $\deg(u)>1$ for all $u\in \G(J)$, then $J$ is a matroidal ideal.
\end{lem}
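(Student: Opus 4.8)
The plan is to prove that $J$ is polymatroidal; since $J$ is squarefree by hypothesis this is exactly the assertion that $J$ is matroidal. First I would record the transversal dictionary. As $J$ is squarefree and proper (the degree hypothesis forbids generators of degree $0$ or $1$), we have $J=\sqrt{I}=\fp_1\cap\cdots\cap\fp_r$ for the minimal primes $\fp_i$ of $I$ other than $\fm$. Thus a squarefree monomial lies in $J$ precisely when its support meets $\supp(\fp_i)$ for every $i$ (call such a support a \emph{transversal}), and $\G(J)$ consists of the squarefree monomials whose support is a minimal transversal. Moreover, because $I=J\cap\fm^d$ is generated in degree $d$, every degree-$d$ monomial of $J$ is a minimal generator of $I$; hence $\G(I)$ is exactly the set of degree-$d$ monomials with transversal support, and in particular $\supp w$ is a transversal for every $w\in\G(I)\subseteq J$. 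With this dictionary it remains to prove: (i) $J$ is generated in a single degree, and (ii) $J$ satisfies the polymatroidal exchange property.

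For (ii) I would transfer the exchange property of $I$ by lifting. Take $u,v\in\G(J)$ with $\deg_{x_i}(u)>\deg_{x_i}(v)$, i.e. $x_i\mid u$ and $x_i\nmid v$. Since $\deg u\ge 2$ I may choose $x_p\mid u$ with $p\ne i$ and set $U=u\,x_p^{\,d-\deg u}$; this is a degree-$d$ element of $J\cap\fm^d=I$, hence $U\in\G(I)$, with $\supp U=\supp u$ and $\deg_{x_i}U=1$. Likewise set $V=v\,x_q^{\,d-\deg v}$ for some $x_q\mid v$, so $V\in\G(I)$ with $\deg_{x_i}V=0$. Applying the exchange property of $I$ at the index $i$ produces $\ell$ with $\deg_{x_\ell}V>\deg_{x_\ell}U$ and $x_\ell(U/x_i)\in\G(I)\subseteq J$. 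Then $x_\ell\mid v$ and $\ell\ne i$, and since $\deg_{x_i}U=1$ the support of $x_\ell(U/x_i)$ is $(\supp u\setminus\{x_i\})\cup\{x_\ell\}$. If $x_\ell$ already divided $u$ this support would be $\supp u\setminus\{x_i\}$, a transversal strictly inside $\supp u$, contradicting the minimality of the transversal $\supp u$; hence $x_\ell\nmid u$, so $\deg_{x_\ell}v>\deg_{x_\ell}u$. Finally $x_\ell(u/x_i)$ is the squarefree monomial on $(\supp u\setminus\{x_i\})\cup\{x_\ell\}$, whose support is a transversal, so $x_\ell(u/x_i)\in J$, which is exactly the exchange conclusion for $J$.

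The main obstacle is (i): that all minimal transversals have the same cardinality, equivalently that the supports of $\G(J)$ are the bases of a matroid. This step genuinely uses that $I$ is polymatroidal and not merely that $J=\sqrt I$: if two minimal transversals had different sizes, with a single variable of the smaller one covering two primes that are separated in the larger one, then the exchange property of $I$ fails, e.g. for the primes $(x_1,x_2),(x_1,x_3),(x_4,x_5)$ the generators $x_1x_4^2$ and $x_2x_3x_4$ of $I$ admit no exchange at $x_1$. To establish (i) I would argue by contradiction, choosing minimal transversals $A,B$ with $|A|<|B|$ and with $|B\setminus A|$ least possible; lifting squarefree monomials on $A$ and $B$ to generators of $I$ as in (ii) and applying the exchange of $I$ at a variable $b\in B\setminus A$ produces a generator of $I$ supported on $(B\setminus\{b\})\cup\{a\}$ with $a\in A$. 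If $a\in B$ this is a transversal strictly inside $B$, contradicting minimality; otherwise it is a transversal whose difference from $A$ is strictly smaller, and passing to a minimal transversal inside it should yield a smaller counterexample. I expect the delicate point, and the real crux of the lemma, to be organizing this descent so that the extremal choice is contradicted in every subcase, in particular handling the case where the swapped set is not itself minimal. Once (i) is in hand, a degree-$d$ monomial whose support is a transversal of minimal size automatically has minimal transversal support, so the membership-form exchange of (ii) together with single-degree generation gives, by the characterization of polymatroidal ideals recalled in Section~2, that $J$ is polymatroidal and hence matroidal.
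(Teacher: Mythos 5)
Your step (ii) is correct, and it is in fact the entirety of the paper's own proof: the paper likewise chooses $x_h\mid u$ with $h\neq i$, lifts to $u'=x_h^{d-\deg(u)}u\in\G(I)$ and $v'=x_l^{d-\deg(v)}v\in\G(I)$, applies the exchange property of $I$ at $x_i$, and uses squarefreeness of $J$ to descend to $x_ju/x_i\in J$ with $\deg_{x_j}(u)<\deg_{x_j}(v)$. Notably, the paper stops there: it never addresses your point (i), the single-degree generation of $J$, even though that is part of the definition of a matroidal (i.e.\ squarefree polymatroidal) ideal. So your instinct that (i) requires an argument is sound; the paper is implicitly invoking the standard matroid-theoretic fact that an antichain of finite sets satisfying the exchange property has all members of the same cardinality.

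The genuine gap in your proposal is exactly the subcase of (i) that you flag as unresolved, and as you have organized the descent it really does stall there. You exchange out of the \emph{larger} transversal $B$, minimizing $|B\setminus A|$, and when $a\notin B$ you pass to a minimal transversal $C\subseteq(B\setminus\{b\})\cup\{a\}$. But nothing prevents $|C|=|A|$: then the pair $(A,C)$ is not a counterexample at all, and the pair $(C,B)$ satisfies only $|B\setminus C|=|B|-|A|+1\leq|B\setminus A|$, with equality precisely when $|A\setminus B|=1$, so your extremal choice need not be contradicted. The repair is to run the descent from the \emph{smaller} set and to quote the exchange property you already proved in (ii), so that no further lifting is needed. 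Among pairs of minimal transversals with $|A|<|B|$, choose one minimizing $|A\setminus B|$. Then $A\setminus B\neq\emptyset$, since $A\subsetneq B$ would contradict the minimality of $B$ as a transversal. Pick $i\in A\setminus B$; by (ii) there is $\ell\in B\setminus A$ such that $(A\setminus\{i\})\cup\{\ell\}$ is a transversal, hence contains a minimal transversal $C$. If $\ell\notin C$, then $C\subseteq A\setminus\{i\}$, contradicting minimality of $A$; if $\ell\in C$, then $|C|\leq|A|<|B|$ and $C\setminus B\subseteq(A\setminus B)\setminus\{i\}$, so $|C\setminus B|<|A\setminus B|$, contradicting the extremal choice of $(A,B)$. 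With this reorganization your two-step plan is complete, and it even makes explicit a point that the paper's own proof leaves tacit.
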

\begin{proof}
Let $u,v\in \G(J)$ such that $x_i|u$ and $x_i\nmid v$. Then $x_l|v$
and $x_l\nmid u$ for some $l\neq i$. By assumption  there exists
$h\neq i$ such that $x_h|u$. Now $u'=x_h^{d-s}u$ and $v'=x_l^{d-r}v$
belong to $\G(I)$, where $r=\deg(v)$ and $s=\deg(u)$. Since
$\deg_{x_i}(u')>\deg_{x_i}(v')$ and $I$ is polymatroidal ideal,
there exists $1\leq j\neq i\leq n$ such that
$\deg_{x_j}(u')<\deg_{x_j}(v')$ and $x_ju'/x_i\in \G(I)$. Hence
$x_ju'/x_i\in J$. Note that $J$ is squarefree, $x_h|u$ and $h\neq
i$, thus $x_ju/x_i\in J$ and also $\deg_{x_j}(u)<\deg_{x_j}(v)$.
\end{proof}

\begin{lem}\label{cap-prod}
Let $I=J\cap\fm^d$ be a monomial ideal generated in degree $d$ where
$J$ is a monomial ideal generated in degree $t\leq d$. Then
$I=J\fm^{d-t}$.
\end{lem}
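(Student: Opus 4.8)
The plan is to prove the equality of the two monomial ideals $I=J\cap\fm^d$ and $J\fm^{d-t}$ by establishing the two inclusions at the level of monomials, which suffices because both sides are monomial ideals. First I would dispose of the inclusion $J\fm^{d-t}\subseteq I$, which is the soft direction: since $\fm^{d-t}\subseteq S$ we trivially have $J\fm^{d-t}\subseteq J$, and since $J$ is generated in degree $t$ we have $J\subseteq\fm^t$, whence $J\fm^{d-t}\subseteq\fm^t\fm^{d-t}=\fm^d$. Combining these gives $J\fm^{d-t}\subseteq J\cap\fm^d=I$.

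For the reverse inclusion $I\subseteq J\fm^{d-t}$, it is enough to show that every monomial $m\in J\cap\fm^d$ lies in $J\fm^{d-t}$. The key structural input is that $J$ is generated in the single degree $t$: since $m\in J$, it is divisible by some minimal generator $u\in\G(J)$, and $\deg u=t$, so I may write $m=uv$ for a monomial $v$ with $\deg v=\deg m-t$. Because $m\in\fm^d$ forces $\deg m\geq d$, this yields $\deg v\geq d-t$, that is $v\in\fm^{d-t}$, and therefore $m=uv\in J\fm^{d-t}$. Together with the first inclusion this gives $I=J\fm^{d-t}$.

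The point requiring care — rather than a genuine obstacle — is that the factorization $m=uv$ with the degree of $v$ correctly controlled relies essentially on $J$ being generated in a single degree; this is exactly what upgrades the qualitative membership $m\in J$ to the quantitative bound $\deg v\geq d-t$ needed to land $v$ in $\fm^{d-t}$. The boundary case $d=t$ is automatic, since then $\fm^{d-t}=\fm^0=S$ and $J\subseteq\fm^d$ already forces $I=J=J\fm^0$. I would also note that the hypothesis that $I$ is generated in degree $d$ is not actually used in the argument: the computation shows $J\cap\fm^d=J\fm^{d-t}$ for any monomial ideal $J$ generated in degree $t\leq d$, and the right-hand side is then automatically generated in degree $d$.
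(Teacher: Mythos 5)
Your proof is correct and is essentially the paper's own argument: both directions are handled the same way, with the key step being to factor a monomial of $I$ as a minimal generator of $J$ (of degree $t$) times a cofactor of degree at least $d-t$. The only cosmetic difference is that the paper applies this factorization to the minimal generators $u\in\G(I)$ (which have degree exactly $d$), whereas you apply it to arbitrary monomials of $J\cap\fm^d$, which is why you can also observe that the hypothesis on $I$ being generated in degree $d$ is not needed.
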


\begin{proof}
It is clear that $J\fm^{d-t}\subseteq I$. Now let $u\in\G(I)$, so
there exists $v\in\G(J)$ such that $v|u$. So since $\deg(v)=t\leq
d=\deg(u)$, there exists a monomial $w$ of degree $d-t$ such that
$u=vw$. Hence $u\in J\fm^{d-t}$.
\end{proof}

\begin{thm}\label{th}
Let $I=J\cap\fm^s$ be a fully supported monomial ideal in
$S=K[x_1,\ldots,x_n]$ and generated in degree $d$, where
$s\in\{0,d\}$. Then $I$ is  a generalized Cohen-Macaulay polymatroidal
ideal if and only if one of the following statements holds true:
\begin{enumerate}
\item[a)] $J$ is a Cohen-Macaulay polymatroidal   ideal i.e. $J$ is either a principal ideal, a Veronese ideal, or a squarefree Veronese ideal.
\item[b)] $J=\fp_1^{a_1}\cap\cdots\cap\fp_r^{a_r}$ is equidimensional and $\fp_i+\fp_j=\fm$ for all $i\neq j$.
\item[c)] $J$ is an unmixed matroidal ideal of degree 2.
\end{enumerate}
\end{thm}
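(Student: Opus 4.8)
The plan is to reduce every statement about $I$ to one about the ``non-$\fm$ part'' $J=\fp_1^{a_1}\cap\cdots\cap\fp_r^{a_r}$ (the intersection of the primary components of $I$ other than $\fm$, as furnished by Proposition \ref{poly}) together with its monomial localizations, and then to run a case analysis on the common degree $d$ and on whether $J$ is squarefree. The starting observation is that localizing kills the $\fm^s$-component: for every monomial prime $\fp\neq\fm$ at least one variable is inverted, so $\supp(\fm^s)$ meets the inverted set and Remark \ref{Q}(a) gives $I(\fp)=J(\fp)$. Moreover $\fm$ is never minimal over $I$, so the minimal primes of $I$ and of $J$ coincide and $I$ is equidimensional if and only if $J$ is. Feeding these two facts into Lemma \ref{eq} (applied to $I$, and then recognised via Lemma \ref{eq} for $J$) shows that $I$ is generalized Cohen--Macaulay exactly when $J$ is equidimensional and $J(\fp)$ is Cohen--Macaulay for every monomial prime $\fp\neq\fm$; call this condition $(\star)$. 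The theorem thus becomes the claim that, among ideals $J$ arising as the non-$\fm$ part of a polymatroidal ideal, $(\star)$ is equivalent to (a), (b) or (c).

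For the direction (a)$\vee$(b)$\vee$(c)$\Rightarrow I$ generalized Cohen--Macaulay polymatroidal I would argue in two steps. First, $I$ is polymatroidal: when $J$ is generated in a single degree $t\le d$ (all of (a) and (c), and the single-degree instances of (b)), Lemma \ref{cap-prod} gives $I=J\fm^{d-t}$, a product of polymatroidal ideals, hence polymatroidal; the remaining instances of (b) are assembled from the localizations using $\fp_i+\fp_j=\fm$. Second, $(\star)$ holds: in (a), $J$ Cohen--Macaulay localizes to Cohen--Macaulay and is unmixed, hence equidimensional; in (c), $J$ is unmixed by Lemma \ref{poly2} and each $J(\fp_{\{i\}})$ is generated by variables, so Cohen--Macaulay; in (b), equidimensionality is assumed and $\fp_i+\fp_j=\fm$ forces $\supp(\fp_i)\cup\supp(\fp_j)=\{x_1,\dots,x_n\}$, so at a proper prime at most one component $\fp_i^{a_i}$ survives localization, and a single surviving prime power is a Veronese ideal of the localized ring, Cohen--Macaulay by Theorem \ref{CM}. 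In each case the reduction above yields that $I$ is generalized Cohen--Macaulay.

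The reverse implication carries the content. Assume $(\star)$. If $J$ is squarefree, then all $a_i=1$; discarding the degenerate case of a degree-one generator (which gives a principal ideal, case (a)), $J$ is directly matroidal when $s=0$ and is matroidal by Lemma \ref{exc} when $s=d$. Now $(\star)$ says precisely that $J$ is generalized Cohen--Macaulay, so if $\deg J=2$ we are in (c), while if $\deg J>2$ Theorem \ref{mat} upgrades this to Cohen--Macaulay and Theorem \ref{CM} identifies $J$ as a squarefree Veronese ideal, case (a). If $J$ is not squarefree, some $a_{i_0}>1$, and I claim $(\star)$ forces pairwise comaximality, i.e.\ case (b): if some $\fp_{i_0}$ with $a_{i_0}>1$ failed to be comaximal with a $\fp_j$, then $\supp(\fp_{i_0})\cup\supp(\fp_j)$ omits a variable, so there is a proper monomial prime $\fp$ with $\supp(\fp)=\supp(\fp_{i_0})\cup\supp(\fp_j)$ at which $\fp_{i_0}^{a_{i_0}}$ (still a genuine power) and $\fp_j$ both survive; then $J(\fp)=I(\fp)$ is a \emph{non-squarefree} polymatroidal ideal with at least two minimal primes, which by Theorem \ref{CM} cannot be Cohen--Macaulay, contradicting $(\star)$. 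Combined with the equidimensionality supplied by $(\star)$ this yields (b), the case $r=1$ being a Veronese ideal absorbed into (a) by full support.

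The step I expect to be the main obstacle is exactly this extraction of $\fp_i+\fp_j=\fm$ in the non-squarefree case. It relies on the fact that a monomial localization of a polymatroidal ideal is again polymatroidal, so that Theorem \ref{CM} may be applied to $J(\fp)=I(\fp)$, and on choosing $\fp$ so that a \emph{power} component and a \emph{second} prime are retained simultaneously---the one configuration Theorem \ref{CM} forbids for Cohen--Macaulayness. The truly delicate point is completing this to comaximality of \emph{all} pairs (including two squarefree components) once a power component is present: here one must exploit the rigidity of pairwise-comaximal height-$h$ monomial primes together with the polymatroidal exchange property, in the spirit of Proposition \ref{mat1} and the identity $I=\sum_i x_iI(\fp_{\{i\}})$ used in Theorem \ref{mat}. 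A secondary technical hurdle is checking, in case (b), that $I=J\cap\fm^s$ is genuinely polymatroidal even when $J$ itself is not generated in a single degree; this again requires reassembling $I$ from its localizations using the comaximality condition.
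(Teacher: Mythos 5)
Your reduction to condition $(\star)$ (kill the $\fm^s$-component under localization, then apply Lemma \ref{eq}) is exactly the skeleton of the paper's argument, and your forward direction matches the paper's, except that in case (b) the paper does not ``reassemble $I$ from localizations'': it simply invokes \cite[Theorem 3.1]{FV} to get polymatroidality of an equigenerated ideal whose components are pairwise comaximal prime powers. That step is a citation, not a hurdle, so you should import it rather than leave it open. The real problems are in your converse, in both branches of your dichotomy ``$J$ squarefree $\Rightarrow$ (a) or (c); $J$ non-squarefree $\Rightarrow$ (b).''

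The gap you yourself flag in the non-squarefree branch is genuine and is precisely where the paper's proof has its one real idea, which is organized differently from what you attempt. You try to prove comaximality of all pairs directly and can only handle pairs involving a power component. The paper instead proves the contrapositive: assume (a) \emph{and} (b) fail, take a non-comaximal pair, say $\fq=\fp_1+\fp_2\neq\fm$. Then $I(\fq)=\fp_1^{a_1}\cap\cdots\cap\fp_t^{a_t}$ is Cohen--Macaulay and non-principal, hence squarefree Veronese by Theorem \ref{CM}, forcing $a_1=\cdots=a_t=1$. Now \emph{any} remaining power component $\fp_i^{a_i}$ with $a_i>1$ is killed by playing it against this Veronese piece: choose $x_l\in\fp_1\setminus\fp_i$; since $I(\fq)$ is equigenerated and not prime, $x_l\notin\fp_j$ for some $j\leq t$; localizing at $\fp_{\{l\}}$ retains both $\fp_j$ and $\fp_i^{a_i}$, giving a non-squarefree Cohen--Macaulay polymatroidal ideal with two minimal primes, contradicting Theorem \ref{CM}. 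So one bad pair forces $J$ to be squarefree --- which is exactly the statement you needed and could not reach, obtained without ever proving comaximality of exponent-one pairs. Note also that your own partial argument silently needs $\h(J)\geq 2$: if $\h(J)=1$ the offending localization is principal, hence Cohen--Macaulay, and no contradiction arises; indeed $J=(x_1^2x_2)$, $I=J\cap\fm^4\subset k[x_1,x_2,x_3]$ is a generalized Cohen--Macaulay polymatroidal ideal that is non-squarefree yet satisfies (a) and \emph{not} (b), so your claimed implication ``non-squarefree $\Rightarrow$ (b)'' is false as stated.

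Your squarefree branch also contains an error: the parenthetical ``a degree-one generator gives a principal ideal, case (a)'' is wrong. Take $J=(x_1,x_2)\cap(x_1,x_3)=(x_1,x_2x_3)$ and $I=J\cap\fm^2=(x_1^2,x_1x_2,x_1x_3,x_2x_3)$ --- the paper's own example after Lemma \ref{poly2}. Here $I$ is a generalized Cohen--Macaulay polymatroidal ideal, $J$ is squarefree with the degree-one generator $x_1$, it is not principal, not polymatroidal (it is not even equigenerated), and the correct bucket is (b), which your branch cannot produce. The paper avoids both traps by proving $\neg(\mathrm{a})\wedge\neg(\mathrm{b})\Rightarrow(\mathrm{c})$: the failure of (b) is what supplies the squarefree Veronese localization $I(\fq)$ of height $>1$, and that in turn shows $J$ contains no variables, so Lemma \ref{exc} legitimately applies and Theorem \ref{mat} forces degree $2$, i.e.\ (c). In short: your reduction and forward direction are sound, but the converse needs the paper's reorganization (start from a non-comaximal pair) to close the two cases your dichotomy mishandles.
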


\begin{proof}
By Lemma \ref{cap-prod}, each of statements (a) or (c) implies
that  $I$ is polymatroidal. Since $I$ is generated in a single
degree, from  statement (b)  follows that $I$ is polymatroidal by
\cite[Theorem 3.1]{FV}.

Whenever (a) holds, $I$ is equidimensional, since $J$ is unmixed. On the other hand for all monomial prime $\fp\neq \fm$, $I(\fp)=J(\fp)$ is Cohen-Macaulay.

Assume that (b) holds and let $\fq\in\V(I)\setminus\{\fm\}$ be a monomial prime ideal. Since $\fp_i+\fp_j=\fm$ for all $i\neq j$
and $\fq\neq \fm$, we get $I(\fq)=\fp_k^{a_k}$ for some $k$, $1\leq k\leq r$ or  $I(\fq)=S$.

Assume that (c) holds. By  Proposition \ref{2}, $J$ is generalized
Cohen-Macaulay. Thus for all monomial prime $\fp\neq \fm$,
$I(\fp)=J(\fp)$ is Cohen-Macaulay.

Conversely, assume that $I$ is a generalized
Cohen-Macaulay  polymatroidal   ideal, (a) and (b) don't hold. Note that
$J:=\fp_1^{a_1}\cap\ldots\cap\fp_r^{a_r}$ is an unmixed ideal. Since
(b) doesn't hold, let for convenience  $\fq=\fp_1+\fp_2\neq\fm$. Then $I(\fq)=\fp_1^{a_1}\cap\fp_2^{a_2}\cap\cdots\cap\fp_t^{a_t}$ is Cohen-Macaulay for some $2\leq t\leq r$.
Since the unmixed ideal $J$ is not principal,  $J(\fq)=I(\fq)$ is not principal.  Now, Theorem \ref{CM}
  implies that  $I(\fq)$ is squarefree Veronese ideal. Therefore $a_1=\cdots=a_t=1$ and
  so $I=\fp_1\cap\cdots\cap\fp_t\cap\fp_{t+1}^{a_{t+1}}\cap\cdots\cap\fp_r^{a_r}\cap\fm^s$.

We claim that $a_{t+1}=\cdots=a_r=1$. Otherwise, there exists
$t+1\leq i\leq r$ such that $a_i\neq 1$. Since $\fp_1\nsubseteq
\fp_{i}$, there exists a variable $x_l\in\fp_1\setminus\fp_{i}$.
Note that  $x_l\notin \cap^{t}_{j=1}\supp(\fp_j)$, since $I(\fq)$ is
generated in a single degree and is not a prime ideal. Let
$x_l\notin \fp_j$ for $1\leq j\leq t$. Then
$I(\fp_{\{l\}})=\fp_j\cap\fp_{i}^{a_{i}}\cap\fq'$ which is not
Cohen-Macaulay.  This contradiction implies our claim that $I=J\cap
\fm^s$ where $J$ is a squarefree monomial ideal. Since $I(\fq)$ is
squarefree  Veronese ideal of height greater than one, $J$ does not
contain any variables since $J(\fq)=I(\fq)$. Now,   the result
follows by Lemma \ref{exc} and Theorem \ref{mat}, since $J$ is not
Cohen-Macaulay.
\end{proof}

The following  examples  show that in the  above characterization,
none of the items (a), (b) or (c)  can be removed.

\begin{ex}\label{exam1} \em{
(i) The ideal $I=(x_1x_2^3,x_1^2x_2^2)=(x_1)\cap(x_2^2)\cap(x_1,x_2)^4$ is polymatroidal which satisfies  (a) and (b), but (c) doesn't hold for it.

(ii) The ideal $I=(x_1^2x_2,x_1x_2^2,x_1x_2x_3)=(x_1)\cap(x_2)\cap(x_1,x_2,x_3)^3$ is polymatroidal
which satisfies  (a) and (c), but (b) doesn't hold for it.

(iii) The ideal  $I=(x_1,x_2,x_3,x_4)\cap(x_3,x_4,x_5,x_6)\cap(x_1,x_2,x_5,x_6)$ constructed in \cite{HH},
 is matroidal ideal which satisfies  (b) and (c), but (a) doesn't hold for it. }
\end{ex}

\begin{ex}\label{exam2}\em{
 The ideal $I=(x_1,x_2)\cap(x_2,x_3)^2\cap(x_1,x_2,x_3)^3$ is polymatroidal by \cite[Theorem 3.1]{FV} and
 generalized Cohen-Macaulay  by Theorem \ref{th} satisfying condition (b), but $J=(x_1,x_2)\cap(x_2,x_3)^2$ is not even generated in a single degree.}
\end{ex}

Note that the above example is connected in codimension one. There exist polymatroidal ideals connected in codimension one, which are
not  generalized Cohen-Macaulay, see  Example \ref{exam-c2}. In this
example the localization $I(\fp_{\{3\}})=(x_1)\cap(x_1,x_2)^2$ is
not Cohen-Macaulay.

Polymatroidal ideals  which satisfy  condition (c) of Theorem
\ref{th}, can be specified by  the following lemma.

\begin{lem}\label{akhar}
Let $I$ be a fully supported monomial ideal of degree 2. Then $I$ is
polymatroidal if and only if $\fp_i+\fp_j=\fm$ for $i\neq j$ and all
$\fp_i\in\Ass(S/I)$.
\end{lem}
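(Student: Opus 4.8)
The plan is to prove the two implications separately, using monomial localization for the forward direction and the structural criterion \cite[Theorem 3.1]{FV} for the converse. Throughout I write $I=\cap_k Q_k$ for the irredundant monomial primary decomposition, with $\sqrt{Q_k}=\fp_k$ and $\supp(Q_k)=\supp(\fp_k)$.

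For the forward implication, suppose $I$ is polymatroidal and assume for contradiction that there are distinct $\fp_i,\fp_j\in\Ass(S/I)$ with $\fp_i+\fp_j\neq\fm$. Since any associated prime is comaximal with $\fm$, both $\fp_i,\fp_j$ are proper, so $\supp(\fp_i)\cup\supp(\fp_j)\neq\{x_1,\ldots,x_n\}$ and there is a variable $x_l$ lying in neither support. Localizing at $\fp_{\{l\}}$, Remark \ref{Q}(a) shows that the $\fp_i$- and $\fp_j$-primary components both survive, so $\fp_i,\fp_j\in\Ass(S/I(\fp_{\{l\}}))$ are two distinct associated primes. On the other hand a monomial localization of a polymatroidal ideal is again polymatroidal (cf. \cite[Corollary 3.2]{HRV}), hence generated in a single degree, and by Remark \ref{Q}(b) that degree equals $2-a_l$, where $a_l=\max\{\deg_{x_l}(u):u\in\G(I)\}\geq 1$, the inequality coming from full support. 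If $a_l=2$ then $I(\fp_{\{l\}})=S$, contradicting the survival of $\fp_i$; hence $a_l=1$ and $I(\fp_{\{l\}})$ is generated in degree one, i.e.\ by variables, so it is a monomial prime. This contradicts the presence of two distinct associated primes, and comaximality follows.

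For the converse, assuming $\fp_i+\fp_j=\fm$ for all distinct associated primes, I would first record the equivalent combinatorial form of the hypothesis: the complements $\{x_1,\ldots,x_n\}\setminus\supp(\fp_k)$ are pairwise disjoint. The goal is then to identify the primary decomposition of $I$ as an intersection $\fp_1^{a_1}\cap\cdots\cap\fp_r^{a_r}$ of powers of these pairwise comaximal primes, with $a_k\in\{1,2\}$ and an $\fm^2$ factor allowed when $\fm\in\Ass(S/I)$. Once this is in hand, $I$ is an intersection of powers of pairwise comaximal monomial primes generated in the single degree $2$, so \cite[Theorem 3.1]{FV} applies and yields that $I$ is polymatroidal.

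The step I expect to be the main obstacle is exactly this structural identification, namely showing that each $\fp_k$-primary component $Q_k$ is a power of $\fp_k$ (equivalently, that $I(\fp_k)$ is a Veronese ideal in the variables of $\supp(\fp_k)$). A priori a degree-two monomial ideal may have a primary component that is \emph{not} a prime power, and it is precisely the pairwise comaximality—the disjointness of the complements of the supports—that must be exploited to rule this out; the natural tool is to compare, via Remark \ref{Q}(b) and the divisibility analysis of Lemma \ref{loc}, the degrees of the one-variable localizations $I(\fp_{\{l\}})$ as $x_l$ ranges over $\supp(\fp_k)$. The degenerate case in which $\fm$ is the only associated prime collapses the decomposition and should be handled separately, since there the comaximality condition is vacuous and the conclusion must be extracted by a direct inspection of the generators.
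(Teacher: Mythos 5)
Your forward direction is correct and is essentially the paper's own argument: both proofs rest on the facts that a monomial localization of a polymatroidal ideal is again polymatroidal (hence equigenerated) and that full support forces the degree of the localization to drop to at most one once a variable outside the two supports is inverted, so that two distinct surviving associated primes give a contradiction. The paper localizes at $\fq=\fp_i+\fp_j$ while you localize at a single-variable complement $\fp_{\{l\}}$; this is an inessential difference.

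The converse is where your proposal stops being a proof, and the gap is not closable. You correctly observe that invoking \cite[Theorem 3.1]{FV} requires first knowing that $I$ decomposes as $\fp_1^{a_1}\cap\cdots\cap\fp_r^{a_r}$ (possibly with an $\fm^2$ factor), i.e.\ that every primary component is a power of its radical, and you defer exactly this ``structural identification'' as the expected main obstacle. But that step is not merely missing: it is false under the stated hypotheses. Take $I=(xy,xz,y^2)=(x,y^2)\cap(y,z)$ in $k[x,y,z]$. It is fully supported, generated in degree $2$, and $\Ass(S/I)=\{(x,y),(y,z)\}$ with $(x,y)+(y,z)=\fm$; yet $I$ is not polymatroidal, since the exchange property fails for $u=xz$, $v=y^2$, $i=x$ (the only admissible $j$ is $y$, and $yz\notin I$). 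The failure occurs precisely because the $(x,y)$-primary component $(x,y^2)$ is not a power of $(x,y)$, which is the identification you hoped to extract from comaximality. The degenerate case you set aside fails as well: $(x^2,y^2)\subset k[x,y]$ satisfies the (vacuous) comaximality condition but is not polymatroidal, so no ``direct inspection of the generators'' can rescue it. What your analysis actually uncovers is that the paper's own proof of this direction --- a bare citation of \cite[Theorem 3.1]{FV}, whose hypothesis is the prime-power decomposition, not merely pairwise comaximality of associated primes --- has the same defect, and that the ``if'' direction of the lemma requires a stronger hypothesis (e.g.\ $I=\fp_1^{a_1}\cap\cdots\cap\fp_r^{a_r}$ with $\fp_i+\fp_j=\fm$ for $i\neq j$) to be true. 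Identifying the obstacle is valuable, but your proposal neither overcomes it nor notices that it cannot be overcome, so as a proof of the stated equivalence it is incomplete.
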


\begin{proof}
Let $\fp_i+\fp_j=\fm$ for $i\neq j$ and all $\fp_i\in\Ass(S/I)$.
Since $I$ is generated in  a single degree, it follows  by
\cite[Theorem 3.1]{FV} that $I$ is polymatroidal. Conversely, Let
$I=\fp_1^{a_1}\cap\fp_2^{a_2}\cap\cdots\cap\fp_t^{a_t}$ be
polymatroidal and $\fq=\fp_i+\fp_j\neq\fm$ for some $i\neq j$. Then
$I(\fq)=\fp_i^{a_i}\cap\fp_j^{a_j}\cap\fq'$ for some monomial ideal
$\fq'$. Since $I$ is generated in degree 2 it follows that the ideal
$I(\fq)$ is a monomial prime ideal or is equal to $S$, which is a
contradiction.
\end{proof}

By the above lemma, in the case (c) of Theorem \ref{th}, for any
pair of distinct prime ideals $\fp,\fq\in\Ass(S/J)$ we have
$\G(\fp+\fq)=\supp(J)$ and $\supp(J)$ is not necessarily equal to
the set of all variables. But in the case (b), the same condition
holds with the distinctive  point that $J$ is fully supported in
$\supp(I)$, see Example \ref{exam1}(ii).

\section*{Acknowledgments}
The authors would like to thank Hossein Sabzrou for fruitful
discussions and useful comments regarding this paper.


\end{document}